\documentclass[10pt]{amsart}

\usepackage{amsfonts,amssymb,amsmath,amscd,amstext}
\usepackage[colorlinks=true,linkcolor=blue,citecolor=blue]{hyperref}
\usepackage[utf8]{inputenc}
\usepackage{microtype}
\usepackage{graphicx}
\usepackage{changes}
\usepackage{comment}
\usepackage{lmodern}
\usepackage[a4paper,margin=3.5cm]{geometry}
\usepackage{tikz}
\usepackage{float}

\renewcommand{\leq}{\leqslant}
\renewcommand{\geq}{\geqslant}
\renewcommand{\le}{\leqslant}
\renewcommand{\ge}{\geqslant}
\newcommand{\ptl}{\partial}
\newcommand{\hhh}{{\mathcal{H}}}
\newcommand{\norm}[1]{|| #1 ||}

\newcommand{\rr}{{\mathbb{R}}}

\newcommand{\hh}{{\mathbb{H}}}

\newcommand{\sph}{{\mathbb{S}}}
\newcommand{\la}{\lambda}

\newcommand{\Om}{\Omega}

\newcommand{\escpr}[1]{\langle#1\rangle}

\newcommand{\mh}{\mathcal{H}}

\definecolor{asparagus}{rgb}{0.53, 0.66, 0.42}

\DeclareMathOperator{\divv}{div}
\DeclareMathOperator{\intt}{int}

\newtheorem{theorem}{Theorem}[section]
\newtheorem{proposition}[theorem]{Proposition}
\newtheorem{lemma}[theorem]{Lemma}
\newtheorem{corollary}[theorem]{Corollary}

\theoremstyle{definition}

\newtheorem{remark}[theorem]{Remark}
\newtheorem{example}[theorem]{Example}

\newtheorem{definition}[theorem]{Definition} 

\theoremstyle{remark}

\numberwithin{equation}{section}

\begin{document}

\title[Area-minimizing graphs in the sub-Finsler Heisenberg group $\hh^1$]{Area-minimizing horizontal graphs with low-regularity in the sub-Finsler Heisenberg group $\hh^1$}

\author[G.~Giovannardi]{Gianmarco Giovannardi}
\address{Dipartimento di Matematica Informatica "U. Dini", Università degli Studi di Firenze, Viale Morgani 67/A, 50134, Firenze, Italy}
\email{gianmarco.giovannardi@unifi.it}

\author[J.~Pozuelo]{Juli\'an Pozuelo} 
\address{Departamento de Geometría y Topología \& Research Unit MNat \\
Universidad de Granada \\ E--18071 Granada \\ Spain}
\email{pozuelo@ugr.es}

\author[M.~Ritoré]{Manuel Ritoré} 
\address{Departamento de Geometría y Topología \& Research Unit MNat \\
Universidad de Granada \\ E--18071 Granada \\ Spain}
\email{ritore@ugr.es}

\date{\today}
\maketitle

\begin{abstract}
In the Heisenberg group $\mathbb{H}^1$, equipped with a left-invariant and not necessarily symmetric norm in the horizontal distribution,  we provide examples of entire area-minimizing horizontal graphs which are locally Lipschitz in Euclidean sense. A large number of them fail to have further regularity properties. The examples are obtained  by prescribing as singular set a horizontal line or a finite union of horizontal half-lines extending from a given point. We also provide examples of families of area-minimizing cones.
\end{abstract}

\section{Introduction}
\label{sc:introduction}

The regularity of perimeter-minimizing sets in sub-Finsler geometry is currently one of the most challenging problems in Calculus of Variations. A sub-Finsler structure in a Carnot-Carathéodory manifold with a completely non-integrable distribution $\hhh$ is defined by a smooth norm on $\hhh$. The case of a Euclidean norm is that of sub-Riemannian geometry. The notion of sub-Riemannian perimeter was introduced by Garofalo and Nhieu \cite{MR1404326}, while sub-Finsler boundary measures in the first Heisenberg group $\hh^1$ were considered by Sánchez \cite{snchez2017subfinsler}, and sub-Finsler perimeters in $\hh^1$ by Pozuelo and Ritoré \cite{PozueloRitore2021} and Franceschi et al.~\cite{monti-finsler}.

Fine properties of sets of finite perimeter in the Heisenberg groups $\hh^n$ were obtained by Franchi, Serapioni and Serra-Cassano \cite{MR1871966}. Among others, they obtained a structure result for the reduced boundary of a set of finite perimeter: except for a set of small spherical Hausdorff dimension, it is the union of $\hh$-regular hypersurfaces (i.e., level sets of continuous functions with continuous first derivatives in the horizontal directions), see the Main Theorem in page~486 of \cite{MR1871966}. 

The regularity of sub-Riemannian perimeter-minimizing sets has been investiga\-ted by a large number of researchers \cite{MR2165405,MR2435652,MR2405158,MR2333095,MR2648078,MR2609016,MR3044134,MR2875642,MR3259763,MR2448649,MR2481053,MR2262784,MR3984100,MR2583494}. The boundaries of the conjectured solutions to the isoperimetric problem are of class $C^2$, see \cite{MR2312336}, although there exist examples of area-minimizing horizontal graphs which are merely Euclidean Lipschitz, see \cite{MR2262784,MR2455341, MR2448649}. The sub-Riemannian Plateau problem was first  considered by Pauls \cite{MR2043961}. Afterwards, under given Dirichlet conditions on $p$-convex domains, Cheng, Hwang and Yang \cite{MR2262784} proved existence and uniqueness of $t$-graphs (horizontal graphs of the form $t=u(x,y)$) which are Lipschitz continuous weak solutions of the minimal surface equation in $\hh^1$. Later, Pinamonti, Serra Cassano, Treu and Vittone  \cite{MR3445204} obtained existence and uniqueness of $t$-graphs on domains with boundary data satisfying a bounded slope condition, thus showing that Lipschitz regularity is optimal at least in the first Heisenberg group $\hh^1$. Capogna, Citti and Manfredini \cite{MR2583494} established that the intrinsic graph of a Lipschitz continuous function which is, in addition, a viscosity solution of the sub-Riemannian minimal surface equation in $\hh^1$, is of class $C^{1,\alpha}$, with higher regularity in the case of~$\hh^n$, $n>1$, see \cite{MR2774306}. It was shown in \cite{MR2481053} that the regular part of a $t$-graph of class $C^1$ with~continuous  prescribed sub-Riemannian mean curvature in $\hh^1$ is foliated by $C^2$ characteristic curves. Furthermore, in \cite{MR3412382} the authors generalized the previous result when the boundary $S$ is a general $C^1$ surface in a three-dimensional contact sub-Riemannian manifold. Later, Galli in \cite{MR3474402} improved the result in \cite{MR3412382} only assuming that the boundary $S$ is Euclidean Lipschitz and $\hh$-regular. Recently, in \cite{MR4314055} the first and third authors extended the result in \cite{MR3474402} to the sub-Finsler Heisenberg groups. Up to now, determining the optimal regularity of perimeter-minimizing $\hh$-regular hypersurfaces in the Heisenberg group remains an open problem.

Bernstein type problems for surfaces in $\hh^1$ have also received a special attention. The nature of the sub-Riemannian Bernstein problem in the Heisenberg group  is completely different from the Euclidean one even for graphs.  On the one hand the area functional for $t$-graphs is convex as in the Euclidean setting. Therefore the critical points of the area are automatically minimizers for the area functional. However, since $t$-graphs admit singular points where the horizontal gradient vanishes their classification is not an easy task. Thanks to a deep study of the singular set for $C^2$ surfaces in $\hh^1$,  Cheng, Hwang, Malchiodi, and  Yang   \cite{MR2165405} showed that minimal $t$-graphs of class $C^2$ are congruent to a family of surfaces including the hyperbolic paraboloid $u(x,y)=xy$ and the Euclidean planes. Under the hypothesis that the surface is area-stationary, Ritoré and Rosales  proved in \cite{MR2435652} that the surface must be congruent to a hyperbolic paraboloid or to a Euclidean plane. If we consider the class of Euclidean Lipschitz $t$-graphs, the previous classification does not hold since there are several examples of area-minimizing surfaces of low regularity, see \cite{MR2448649}. The complete classification for $C^2$ surfaces was established by Hurtado, Ritoré and Rosales in \cite{MR2609016}, by showing that a complete,  orientable,  connected,  stable  area-stationary surface is congruent either to the hyperbolic paraboloid $u(x,y)=xy$ or to a Euclidean plane. As in the Euclidean setting the stability condition is crucial in order to discard some minimal surfaces such as helicoids and catenoids.

On the other hand, the study of the regularity of intrinsic graphs (i.~e., Riemannian graphs over vertical planes) is a completely different problem since the area functional for such graphs is not convex. Indeed, 
 Danielli, Garofalo,  Nhieu in \cite{MR2405158} discovered that  the  family of graphs
 \[
 u_{\alpha}(x,t)=\frac{\alpha x t}{1+2\alpha x^2},\quad\alpha>0,
 \]
 are area-stationary but \emph{unstable}. In \cite{MR2455341}, Monti, Serra Cassano and Vittone provided an example of an area-minimizing intrinsic graph of regularity $C^{1/2}(\rr^2)$ that is an intrinsic cone. Therefore the Euclidean threshold of dimension $n=8$ fails in the sub-Riemannian setting. In \cite{MR2333095}, Barone Adesi, Serra Cassano and Vittone classified complete $C^2$ area-stationary intrinsic graphs. Later    Danielli, Garofalo, Nhieu and Pauls in \cite{MR2648078} showed that a $C^2$ complete stable embedded minimal surface in $\hh^1$ with empty characteristic set must be a plane.
In \cite{MR3406514} Galli and Ritoré proved that a  complete, oriented  and stable area-stationary $C^1$ surface without singular points is a vertical plane.
Later,  Nicolussi Golo and  Serra Cassano \cite{MR3984100}  showed that Euclidean Lipschitz stable area-stationary intrinsic graphs are vertical planes. Recently, Giovannardi and Ritoré \cite{2021arXiv210502179G} showed that in the Heisenberg group $\hh^1$ with a sub-Finsler~structure, a complete, stable, Euclidean Lipschitz surface without singular points is a vertical plane and Young \cite{2021arXiv210508890Y} proved that a ruled area-minimizing entire intrinsic graph in $\hh^1$ is a vertical plane by introducing a family of deformations of graphical strips based on variations of a vertical curve.

In this note, we provide examples of entire perimeter-minimizing $t$-graphs for a fixed but arbitrary left-invariant  sub-Finsler structure in the first Heisenberg group $\hh^1$. Our examples are inspired by the corresponding sub-Riemannian ones in \cite{MR2448649}. Of particular interest are the conical examples,  invariant by the non-isotropic dilations of $\hh^1$. In the sub-Riemannian case these examples were investigated in \cite{MR4307010} and \cite{MR2448649}.

The paper is organized the following way. In Section~\ref{sc:preliminaries} we include some preliminaries. In Theorem~\ref{thm:1stvar} of Section~\ref{sec:1st} we obtain a necessary and sufficient condition, inspired by Theorem~3.1 in \cite{PozueloRitore2021}, for a surface to be a critical point of the sub-Finsler area. We assume that the surface is piecewise $C^2$, and composed of pieces meeting in a $C^1$ way along $C^1$ curves. This condition will allow us to construct area-minimizing examples in Proposition~\ref{prop:one_singular_line} of Section~\ref{sec:one_singular_line}, and examples with low regularity in Proposition~\ref{prop:cont}. The same construction, keeping fixed the angle at one side (and hence at the other one) of the singular line, provides examples of area-minimizing cones, see Corollary~\ref{cor:cones}. Finally, in Section~\ref{sec:cones} we exhibit some examples of area-minimizing cones in the spirit of \cite{MR4307010}. These examples are obtained in Theorem~\ref{thm:cones} from circular sectors of the area-minimizing cones with one singular half-line obtained in Corollary~\ref{cor:cones}.

\section{Preliminaries}
\label{sc:preliminaries}
\subsection{The Heisenberg group}
\label{sc:heis}
We denote by $\mathbb{H}^1$ the first Heisenberg group: the $3$-dimensional Euclidean space $\rr^3$ with coordinates $(x,y,t)$, endowed with the product $*$ defined by 
\[
(x,y,t)*(\bar{x},\bar{y} ,\bar{t})=(x+\bar{x}, y+\bar{y},t+\bar{t}+ \bar{x}y-x\bar{y}).
\]
A frame of left-invariant vector fields is given by 
\[
X=\dfrac{\partial}{\partial x} + y \dfrac{\partial}{\partial t}, \qquad Y=\dfrac{\partial}{\partial y} - x \dfrac{\partial}{\partial t}, \qquad T=\dfrac{\partial}{\partial t}.
\]
For $p \in \hh^1$, the left translation by $p$ is the diffeomorphism $L_p(q) =p*q$.
The horizontal distribution $\mathcal{H}$ is the planar non-integrable one generated by $X$ and $Y$, which coincides with the kernel of the contact one-form $\omega=dt-y dx+x dy$.

We shall consider on $\mathbb{H}^1$ the left-invariant Riemannian metric $g= \escpr{\cdot,\cdot}$, so that $\{X, Y, T\}$ is a global orthonormal frame, and let   $D$ be the Levi-Civita connection associated to the Riemannian metric $g$. 
Setting $J(U)=D_U T$ for any vector field $U$ in $\hh^1$ we get $J(X)=Y$, $J(Y)=-X$ and $J(T)=0$. Therefore $-J^2$ coincides with the identity when restricted to the horizontal distribution.
 The Riemannian volume of a set $E$ is, up to a constant, the Haar measure of the group and is denoted by $|E|$. The integral of a function $f$ with respect to the Riemannian measure is denoted by $\int f\, d\hh^1$.
 
\subsection{Sub-Finsler norms and perimeter}
\label{sc:subfinsler}
Given a convex set $K\subset\rr^2$ with $0\in\intt(K)$ and associated asymmetric norm $\norm{\cdot}$ in $\rr^2$, we define on $\hh^1$ a left-invariant norm $\norm{\cdot}_K$ on the horizontal distribution by means of the equality
\[
\norm{fX+gY}_K(p)=\norm{(f(p),g(p))},
\] 
for any $p\in\hh^1$. Its dual norm is denoted by $\norm{\cdot}_{K,*}$. 

If  the boundary of $K$ is of class $C^\ell$, for $\ell\ge 2$, and the geodesic curvature of $\ptl K$ is strictly positive, we say that $K$ is of class $C^\ell_+$. When $K$ is of class $C^2_+$, the outer Gauss map $N_K:\ptl K\to\sph^1$ is a diffeomorphism and the map
\[
\pi_K(fX+gY)=N_K^{-1}\bigg(\frac{(f,g)}{\sqrt{f^2+g^2}}\bigg),
\]
defined for non-vanishing horizontal vector fields $U=fX+gY$, satisfies
\[
\norm{U}_{K,*}=\escpr{U,\pi_K(U)},
\]
where $||\cdot||_{K,*}$ is the dual norm of $||\cdot||_K$. See \S 2.3 in \cite{PozueloRitore2021}.

\begin{definition}
Given a convex body $K\subset\rr^2$ containing $0$ in its interior, and a measurable set $E\subset \hh^1$, its horizontal $K$-perimeter in an open set $\Om\subset\hh^1$ is
\[
P_{K}(E,\Om)= \sup \left\{ \int_E \divv (U) \ d\hh^1, U \in \mathcal{H}^1_0(\Om), \norm{U}_{K, \infty} \le 1 \right\},
\]
Here $\norm{U}_{K, \infty}=\sup_{p \in \hh^1} \norm{U_p}_{K_0}$ and $\mathcal{H}_0^1(\Om)$ is the space of $C^1$ horizontal vector fields with compact support in $\Om$. If $\Om=\hh^1$ we write $P_K(E)$ instead of $P_K(E,\hh^1)$. When $P_{K}(E,\Omega)$ is finite we say that $E$ has finite horizontal~$K$-perimeter in $\Omega$.
\end{definition}

\begin{remark}
If $E$ has $C^1$ boundary $\partial E$, then its perimeter $P_K(E)$ is equal to the sub-Finsler area $A_K$ of its boundary, defined by
\[
A_{K}(\partial E)=\int_{\partial E} \norm{N_h}_{K,*} d\sigma.
\]
where $N_h$ is the projection on the horizontal distribution $\mh$ of the Riemannian normal $N$ with respect to the metric $g$, and $d\sigma$ is the Riemannian measure of $\partial E$. For more details see \S 2.4 in \cite{PozueloRitore2021}.
\end{remark}

We will often omit the subscript $K$ to simplify the notation.

\section{The first variation formula and a stationary condition}
\label{sec:1st}
In this section we present some consequences of the first variation formula. We assume that the Heisenberg group $\hh^1$ is endowed with the sub-Finsler structure associated to a convex set $K$ of class $C^2_+$ with $0\in\intt(K)$. Recall that, given a surface $S\subset\hh^1$ of class $C^1$, its \emph{singular set} $S_0$ is composed of those points of $S$ where the~tangent plane is horizontal. The \emph{regular} part of $S$ is $S\smallsetminus S_0$.

\begin{theorem}[{Theorem~3.1 in \cite{PozueloRitore2021}}]
\label{thm:1stvar}
\noindent Let $S$ be an oriented surface of class $C^1$ such that the regular part $S\smallsetminus S_0$ is  of class $C^2$. Consider a $C^2$ vector field $U$ with compact support on $S$, normal component $u=\escpr{U,N}$, and associated flow $\{\varphi_s\}_{s\in\rr}$. Let $\eta=\pi(\nu_h)$, where $\nu_h$ is the horizontal unit normal to $S$. Then we have
\begin{equation}
\label{eq:1stvar}
\frac{d}{ds}\bigg|_{s=0} A_K(\varphi_s(S))=\int_{S\setminus S_0} H_Ku\,dS-\int_{S\setminus S_0} \divv_S(u\eta^\top)\,dS,
\end{equation}
where $\divv_S$ is the Riemannian divergence on $S$ and the superscript $\top$ indicates the projection over the tangent plane to $S$. The quantity $H_K=\escpr{\nabla_Z\pi(\nu_h),Z}$, for $Z=-J(\nu_h)$, is the $K$-mean curvature of $S$.
\end{theorem}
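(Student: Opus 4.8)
The plan is to differentiate directly the integral representation of the sub-Finsler area, reducing first to a normal variation and then combining the Riemannian first-variation formulas with the structure equations of $\hh^1$. Since the flow of a vector field tangent to $S$ leaves $S$ invariant, the families $\{\varphi_s(S)\}$ produced by $U$ and by $uN$ agree to first order, so $\tfrac{d}{ds}\big|_{s=0}A_K(\varphi_s(S))$ depends only on $u$ and I may assume $U=uN$. By a partition of unity I would work locally; on the support of $U$ either $S$ lies in the regular part $S\setminus S_0$, where $\nu_h$, hence $\eta=\pi(\nu_h)$, are $C^1$, or it meets the singular set $S_0$, which is negligible (the characteristic set of a $C^1$ surface has vanishing surface measure). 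On $S\setminus S_0$ one has $\norm{N_h}_{K,*}=\escpr{N_h,\pi(\nu_h)}$, and since $K$ is $C^2_+$ the map $\pi$ is the gradient of $\norm{\cdot}_{K,*}$ away from the origin (immediate from $\norm{U}_{K,*}=\escpr{U,\pi(U)}$ and the $0$-homogeneity of $\pi$); writing $N_{h,s}$ for the horizontal projection of the Riemannian unit normal of $\varphi_s(S)$ and $J_s$ for the Jacobian of $\varphi_s|_S$,
\[
A_K(\varphi_s(S))=\int_{S}\norm{N_{h,s}(\varphi_s(x))}_{K,*}\,J_s(x)\,d\sigma(x).
\]

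Next I would differentiate at $s=0$. The area element contributes $\tfrac{d}{ds}\big|_{0}J_s=\divv_S(uN)=u\,\divv_S N$, the trace of the second fundamental form of $S$. For the integrand the chain rule gives $\tfrac{d}{ds}\big|_{0}\norm{N_{h,s}}_{K,*}=\escpr{\pi(\nu_h),W}$, where $W$ records the derivative of the \emph{components} of $N_{h,s}$ in the left-invariant frame $\{X,Y\}$, i.e.\ in the trivialization of $\hhh$ in which $\norm{\cdot}_{K,*}$ is constant; since $\{X,Y\}$ is not parallel for $D$, $W$ equals the horizontal part of $\tfrac{D}{ds}\big|_{0}N_{h,s}$ plus a frame-rotation term built from $D_UX$ and $D_UY$. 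Using the classical formula $\tfrac{D}{ds}\big|_{0}N_s=-\nabla_S u$ for a normal variation, the identity $D_VT=J(V)$ to obtain $\tfrac{D}{ds}\big|_{0}T=\nabla_U T=uJ(N_h)$, and the Levi-Civita connection table of $D$ on the orthonormal frame $\{X,Y,T\}$, one rewrites the whole integrand derivative in terms of $\nabla_S u$, $u$, $\divv_S N$, the angle function $\theta$ of $\nu_h$ and the scalar $\escpr{\pi(\nu_h),Z}$ (which encodes the curvature of $\ptl K$).

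It then remains to collect terms. Writing $\escpr{\eta^\top,\nabla_S u}=\divv_S(u\eta^\top)-u\,\divv_S(\eta^\top)$ and expanding $\divv_S(\eta^\top)$ in an orthonormal tangent frame $\{Z,S_1\}$ of $S$, the contributions containing $\divv_S N$ cancel against $u\norm{N_h}_{K,*}\divv_S N$, the frame-rotation term cancels against the $J$-term together with the off-diagonal piece $\escpr{\nabla_{S_1}\eta,S_1}$, and what is left is precisely $\escpr{\nabla_Z\pi(\nu_h),Z}\,u=H_Ku$. This produces \eqref{eq:1stvar}.

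I expect the main obstacle to be of two kinds. The long but routine part is the bookkeeping in the last step: one must carry the left-invariant frame rotation, which has no counterpart in the sub-Riemannian case (there $\escpr{\pi(\nu_h),Z}=0$), and check that all second-fundamental-form terms cancel so that exactly the intrinsic quantity $H_K$ survives. The genuinely delicate point is the passage to the singular set: $\norm{\cdot}_{K,*}$ is only Lipschitz at the origin, so the integrand is merely Lipschitz across $S_0$ while the flow displaces $S_0$; one has to show, using that $S_0$ is negligible and that $N_{h,s}(\varphi_s(\cdot))$ vanishes to first order on $S_0$, that the difference quotients are dominated and converge almost everywhere, so that the derivative may legitimately be computed on $S\setminus S_0$ alone — this is exactly where the regularity hypotheses on $S$ are used.
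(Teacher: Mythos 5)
You should first be aware that the paper does not prove this statement at all: Theorem~\ref{thm:1stvar} is imported verbatim as Theorem~3.1 of \cite{PozueloRitore2021}, so there is no in-paper argument to compare with. Your strategy (differentiate $A_K(\varphi_s(S))=\int_S\norm{N_{h,s}\circ\varphi_s}_{K,*}\,J_s\,d\sigma$ under the integral, use that $\pi$ is the gradient of the dual norm in the left-invariant trivialization, insert the classical formulas for the derivative of the Jacobian and of the unit normal together with $D_UT=J(U)$, and handle $S_0$ by dominated convergence) is indeed the standard route by which such first-variation formulas are derived in this literature, so the skeleton is the right one.

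As written, however, there are concrete gaps. First, the reduction ``I may assume $U=uN$'' is not available at this regularity: $N$ is only continuous across $S_0$ (it is $C^1$ only on $S\smallsetminus S_0$), so $uN$ is merely a continuous field there, it need not generate a well-defined $C^1$ flow, and the formulas you then invoke for $J_s$ and $N_s$ do not apply to it; moreover the linearity in $U$ of the first variation is part of what must be proved here, not assumed. The usual remedy is to carry a general $U$ through the computation (the derivative of the normal is then $-\nabla_S u$ plus a shape-operator term in $U^\top$, and these tangential contributions must be shown to reassemble into the same right-hand side), or to first establish differentiability by domination and only then use that the pointwise derivative is linear in $U$. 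Second, the entire analytic content of the statement --- that after inserting $-\nabla_Su$, $D_UT=J(U)$ and the connection table, all second-fundamental-form and frame-rotation terms cancel leaving exactly $H_Ku-\divv_S(u\eta^\top)$ --- is asserted rather than carried out; for an asymmetric $K$ the terms proportional to $\escpr{\eta,Z}$ (which vanish in the sub-Riemannian case) make this cancellation genuinely nontrivial, so it has to be displayed, not announced. Finally, your mechanism at the singular set is off: what is needed (and what the hypotheses give) is that $\norm{N_{h,s}(\varphi_s(p))}_{K,*}J_s(p)$ is Lipschitz in $s$ uniformly in $p$, because $\norm{\cdot}_{K,*}$ is globally Lipschitz and $N_s\circ\varphi_s$, $J_s$ are controlled by the $C^1$ character of $S$ and $\norm{U}_{C^1}$, combined with differentiability at $s=0$ for $p\notin S_0$ and $\sigma(S_0)=0$; the claim that ``$N_{h,s}(\varphi_s(\cdot))$ vanishes to first order on $S_0$'' is neither needed nor justified (nor true in general), so this step should be replaced by the uniform-Lipschitz domination argument.
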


Using Theorem~\ref{thm:1stvar} we can prove the following necessary and sufficient condition for a surface $S$ to be $A_K$-stationary. When a surface $S$ of class $C^1$ is divided into two parts $S^+,S^-$ by a singular curve $S_0$ so that $S^+,S^-$ are of class $C^2$ up to the boundary, the tangent vectors $Z^+,Z^-$ can be chosen so that they parameterize the characteristic curves (i.~e., horizontal curves en the regular part of $S$) as curves leaving from $S_0$, see Corollary~3.6 in \cite{MR2165405} . In this case $\eta^+=\pi(\nu_h)=\pi(J(Z^+))$ and $\eta^-=\pi(J(Z^-))$.

\begin{corollary}
\label{cor:minimal}
Let $S$ be an oriented surface of class $C^1$ such that the singular set $S_0$ is a $C^1$ curve. Assume that $S\smallsetminus S_0$ is the union of two surfaces $S^+, S^-$ of class $C^2$ meeting along $S_0$. Let $\eta^+,\eta^-$ the restrictions of $\eta$ to $S^+$ and $S^-$, respectively. Then $S$ is area-stationary if and only if
\begin{enumerate}
\item $H_K=0$, and
\item $\eta^+-\eta^-$ is tangent to $S_0$.
\end{enumerate}
In particular, condition $H_K=0$ implies that $S\smallsetminus S_0$ is foliated by horizontal straight lines.
\end{corollary}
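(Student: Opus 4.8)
The plan is to apply Theorem~\ref{thm:1stvar} to the decomposed surface $S = S^+ \cup S_0 \cup S^-$ and extract the two conditions by choosing test vector fields with different support properties. Since $S_0$ has measure zero in $S$, the integral $\int_{S\setminus S_0} H_K u\, dS$ in \eqref{eq:1stvar} is just $\int_S H_K u\, dS$, and the divergence term splits as $\int_{S^+} \divv_S(u\eta^\top)\, dS + \int_{S^-} \divv_S(u\eta^\top)\, dS$. First I would integrate each of these two pieces by parts (the Riemannian divergence theorem on $S^\pm$, which are $C^2$ up to the boundary $S_0$): each contributes a bulk term involving $\divv_S(\eta^\top)$ (which combines with $H_K$) and a boundary term over $S_0$ of the form $\pm\int_{S_0} u\, \escpr{\eta^\top, \xi}\, d\ell$, where $\xi$ is the appropriate co-normal to $S_0$ inside $S$. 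Because $S^+$ and $S^-$ induce opposite orientations on their common boundary $S_0$, the two boundary terms combine into $\int_{S_0} u\, \escpr{(\eta^+)^\top - (\eta^-)^\top, \xi}\, d\ell$.

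Next I would localize. To prove necessity: given that $S$ is area-stationary, the first variation vanishes for every admissible $U$. Restricting to $U$ supported in the interior of $S^+$ (or $S^-$), the boundary term drops and one recovers the pointwise condition $H_K = 0$ on the regular part — this is essentially the classical computation; one must observe that along $S_0$ the normal $\nu_h$ degenerates but $H_K = 0$ extends by continuity to the closure since $S^\pm$ are $C^2$ up to $S_0$. With $H_K = 0$ established, the bulk terms vanish and the first variation reduces to the single boundary integral $\int_{S_0} u\, \escpr{(\eta^+ - \eta^-)^\top, \xi}\, d\ell$; letting $u$ range over arbitrary functions on $S_0$ forces $\escpr{(\eta^+ - \eta^-)^\top, \xi} = 0$ along $S_0$. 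Since $\eta^\pm$ are horizontal unit vectors, $\eta^+ - \eta^-$ lies in the horizontal plane; one checks that its tangential projection onto $S$ being orthogonal to $\xi$ means it is tangent to $S_0$ — here I would use that $S_0$ is a horizontal curve (the singular curve of a $C^1$ surface is horizontal, cf. the reference to Corollary~3.6 in \cite{MR2165405}) so the relevant tangent directions are controlled. Sufficiency is the reverse: if (1) and (2) hold, then the bulk term vanishes by (1), the boundary term vanishes by (2), hence the first variation is zero for all $U$.

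For the final assertion, I would recall the structure of $H_K = 0$: by definition $H_K = \escpr{\nabla_Z \pi(\nu_h), Z}$ with $Z = -J(\nu_h)$, and $\pi(\nu_h) = \eta$. A standard computation in the Heisenberg group (as in \cite{PozueloRitore2021}, or the sub-Riemannian prototype in \cite{MR2165405,MR2435652}) shows that the integral curves of $Z$ on the regular part are horizontal lifts of curves with vanishing curvature relative to the $K$-structure, i.e. Euclidean straight lines contained in $S$; vanishing $K$-mean curvature is exactly the condition that these characteristic curves are (lifts of) line segments. I would cite this rather than reprove it.

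The main obstacle I anticipate is the careful handling of the boundary term across $S_0$: one must be sure that $\eta^\pm$ extend continuously up to $S_0$ (which is where the hypothesis that $S^\pm$ are $C^2$ \emph{up to the boundary} is used, together with the choice of $Z^\pm$ leaving from $S_0$ as described before the corollary), that the co-normals from the two sides are genuinely opposite, and that no additional distributional contribution along $S_0$ is hidden in $\divv_S(u\eta^\top)$ because $\eta$ itself may fail to be continuous across $S_0$. Pinning down that the only surviving term is the jump $\escpr{(\eta^+ - \eta^-)^\top, \xi}$, and translating "orthogonal to the co-normal $\xi$" into "tangent to $S_0$", is the delicate point; everything else is a routine application of \eqref{eq:1stvar} and the divergence theorem.
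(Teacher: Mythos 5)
Your argument follows the paper's proof essentially verbatim: apply the divergence theorem to the second term of \eqref{eq:1stvar} on $S^+$ and $S^-$, combine the two boundary contributions into the jump term $\int_{S_0}u\,\escpr{\xi,(\eta^+-\eta^-)^\top}\,dS$, localize test functions to extract $H_K=0$ and the tangency of $\eta^+-\eta^-$ to $S_0$, and quote Theorem~3.14 of \cite{PozueloRitore2021} for the foliation by horizontal straight lines. One small simplification: since $u\eta^\top$ is already tangent to $S^\pm$, the divergence theorem yields only the boundary integrals, so there is no extra bulk term involving $\divv_S(\eta^\top)$ to worry about combining with $H_K$.
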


\begin{proof}
We may apply the divergence theorem to the second term in \eqref{eq:1stvar} to get
\[
\frac{d}{ds}\bigg|_{s=0} A_K(\varphi_s(S))=\int_{S\setminus S_0} H_Ku\,dS-\int_{S_0} u\,\escpr{\xi,(\eta^+-\eta^-)^\top}\,dS,
\]
where $\xi$ is the outer unit normal to $S^+$ along $S_0$. Hence the stationary condition is equivalent to $H=0$ on $S\smallsetminus S_0$ and $\escpr{\xi,\eta^+-\eta^-}=0$. The latter condition is equivalent to that $\eta^+-\eta^-$ be tangent to $S_0$.

That $H_K=0$ implies that $S\smallsetminus S_0$ is foliated by horizontal straight lines was proven in Theorem~3.14 in  \cite{PozueloRitore2021}.
\end{proof}

Since $\nu^+=J(Z^+), \nu^-=J(Z^-)$, where $Z^+$ and $Z^-$ are the extensions of the horizontal tangent vectors in $S^+,S^-$, we have that the second condition in Corollary~\ref{cor:minimal} is equivalent to
\begin{equation}
\label{eq:maincond}
\pi(J(Z^+))-\pi(J(Z^-)) \text{ is tangent to } S_0.
\end{equation}
So a natural question is, given a $C^2_+$ convex body $K$ containing $0$ in its interior, and a unit vector $v\in\sph^1$, can we find a pair of unit vectors $Z^+,Z^-$ such that \eqref{eq:maincond} is satisfied? If such vectors exist, how many pairs can we get? The answer follows from the next result.

\begin{lemma}
\label{lem:maincond}
Let $K$ be a convex body of class $C^2_+$ such that $0\in\intt(K)$. Given $v\in\rr^2\smallsetminus\{0\}$, let $L\subset\rr^2$ be the vector line generated~by $v$. Then, for any $u\in\ptl K$, we have the following possibilities
\begin{enumerate}
\item The only $w\in\ptl K$ such that $w-u\in L$ is $w=u$, or
\item There is only one $w\in\ptl K$, $w\neq u$ such that $w-u\in L$.
\end{enumerate}
The first case happens if and only if $L$ is parallel to the support line of $K$ at $u$.
\end{lemma}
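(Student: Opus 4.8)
The plan is to reduce the statement to a one-variable convexity/monotonicity argument along the line $L$. Fix $u\in\ptl K$ and consider the affine line $u+L$; since $L$ is a fixed vector line through the origin, $u+L$ is the line through $u$ in the direction $v$. The set of $w\in\ptl K$ with $w-u\in L$ is exactly $(u+L)\cap\ptl K$, so the whole lemma is the elementary fact that a line meets the boundary of a $C^2_+$ (hence strictly convex) body in either one point (tangency) or exactly two points, together with the identification of which case occurs.

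First I would observe that $u+L$ is a line passing through the point $u\in\ptl K$. Because $K$ is convex with $0\in\intt(K)$ and $\ptl K$ is $C^2_+$, $K$ is strictly convex and compact, so $u+L$, being a line through a boundary point, meets $\ptl K$ in either exactly one point or exactly two points: this is the standard dichotomy for lines and strictly convex bodies. If it meets $\ptl K$ only at $u$, then $u+L$ cannot enter $\intt(K)$ (otherwise, extending the chord in both directions inside the compact convex $K$ would produce a second boundary point), so $u+L$ is a supporting line of $K$ at $u$; by uniqueness of the supporting line at a $C^1$ boundary point, $u+L$ coincides with the support line of $K$ at $u$, i.e.\ $L$ is parallel to that support line. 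Conversely, if $L$ is parallel to the support line of $K$ at $u$, then $u+L$ \emph{is} that support line, so $u+L\cap K=\{u\}$ by strict convexity, giving case (1). In the remaining case $u+L$ is not the support line at $u$, hence it is a secant line through $u$: it enters $\intt(K)$, so $(u+L)\cap\ptl K$ consists of $u$ and exactly one other point $w\neq u$, which is case (2). Strict convexity (positive geodesic curvature) is what rules out a third intersection point or a whole segment in $\ptl K$.

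I would organize the write-up as: (a) translate the condition $w-u\in L$ into $w\in(u+L)\cap\ptl K$; (b) recall that $C^2_+$ implies strict convexity and that $K$ is compact with nonempty interior; (c) prove the trichotomy ``$u+L$ is tangent at $u$'' vs.\ ``$u+L$ is secant,'' using the supporting-hyperplane characterization and strict convexity to get at most two intersection points; (d) match the tangent case with case (1) and the secant case with case (2); (e) note that ``$u+L$ tangent at $u$'' is by definition equivalent to $L$ being parallel to the support line of $K$ at $u$, yielding the final sentence.

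The only real subtlety—hardly an obstacle—is handling the intersection count rigorously: one must argue that a secant line through a boundary point of a strictly convex compact body hits the boundary in exactly one further point (existence from compactness via the two rays from $u$ along $u+L$, one of which must exit $K$ and cross $\ptl K$ again; uniqueness from strict convexity, since three collinear boundary points would force a boundary segment). This can be stated cleanly and cited as a basic fact about convex bodies, so the proof remains short; the $C^2_+$ hypothesis is used only through strict convexity and the uniqueness of the supporting line at each boundary point.
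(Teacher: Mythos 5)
Your proposal is correct and follows essentially the same route as the paper: translating $L$ to the line $u+L$ through $u$ and invoking the dichotomy that a line through a boundary point of a strictly convex compact body meets the boundary either only at that point (exactly when it is the support line) or at exactly one further point. You simply spell out the details (strict convexity from $C^2_+$, compactness for the second intersection, collinearity ruling out a third point) that the paper's two-line proof leaves implicit.
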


\begin{proof}
Let $T$ be the translation in $\rr^2$ of vector $u$. Then $T(L)$ is a line that meets $\ptl K$ at $u$. The line $T(L)$ intersects $\ptl K$ only once when $L$ is the supporting line of $T(K)$ at $0$; otherwise $L$ intersects $\ptl K$ just at another point $w\neq u$ so that $w-u\in L$.
\end{proof}

\begin{remark}
\label{rk:Kcond}
We use Lemma~\ref{lem:maincond} to understand the behavior of characteristic curves meeting at a singular point $p\in S_0$. Let $Z^+, Z^-$ be the tangent vectors to the characteristic lines starting from $p$. Let $\nu^+,\nu^-$ be the vectors $J(Z^+),J(Z^-)$, and $L$ the line generated by the tangent vector to $S_0$ at $p$. The condition that $S$ is stationary implies that $\eta^+-\eta^-\in L$. If $w=\eta^+$ and $u=\eta^-$ are equal then $\nu^+=\nu^-$ are orthogonal to $L$, which implies that $Z^+,Z^-$ lie in $L$. This is not possible since characteristic lines meet tranversaly the singular line, again by Corollary~3.6 in \cite{MR2165405}.

Hence $\eta^+\neq\eta^-$ and $\eta^+$ is uniquely determined from $\eta^-$ by Lemma~\ref{lem:maincond}. Obviously the roles of $\eta^+$ and $\eta^-$ are interchangeable.
\end{remark}

\begin{figure}[h]
\begin{tikzpicture}[scale=0.7]

\draw[rotate=30,thick] (0,0) ellipse (3 and 2);
\draw (5,0) -- (-5,0) node[above]{$L$};
\draw (4, -1.73) -- (-6,-1.73)  node[above]{$T(L)=L+u$};

\draw[fill=black] (0,0) circle (0.1) node[above left]{$0$};;

\draw[fill=black] (-2.46,-1.73) circle (0.1);
\draw (-2.46,-1.73) node[xshift=20, yshift=8]{$w=\eta^+$};
\draw[->, rotate around={220:(-2.46,-1.73)}] (-2.46,-1.73) -- (-1.46,-1.73) node[xshift=-10]{$\nu^+$};
\draw[->, rotate around={130:(-2.46,-1.73)}] (-2.46,-1.73) -- (-1.46,-1.73) node[xshift=-10] {$Z^+$};

\draw[fill=black] (1,-1.73) circle (0.1);
\draw (1,-1.73) node[xshift=-12, yshift=8]{$u=\eta^-$};
\draw[->, rotate around={-60:(1,-1.73)}] (1,-1.73) -- (2,-1.73) node[xshift=12]{$\nu^-$};
\draw[->, rotate around={210:(1,-1.73)}] (1,-1.73) -- (2,-1.73) node[yshift=-10]{$Z^-$};

\draw (3.27,1.39) node{$\ptl K$};

\draw (-4, 2.32) -- (6,2.32);
\draw[fill=black] (0.85,2.32) circle (0.1) node [xshift=0, yshift=-12] {$u=w$};
\draw[->] (0.85,2.32) -- (.85,3.32) node[xshift=-25, yshift=-10]{$\nu^+=\nu^-$};
\end{tikzpicture}

\caption{Geometric construction to obtain $w=\eta^+$ from $u=\eta^-$ so that the stationary condition is satisfied. The case $\nu^+=\nu^-$ cannot hold.}
\end{figure}
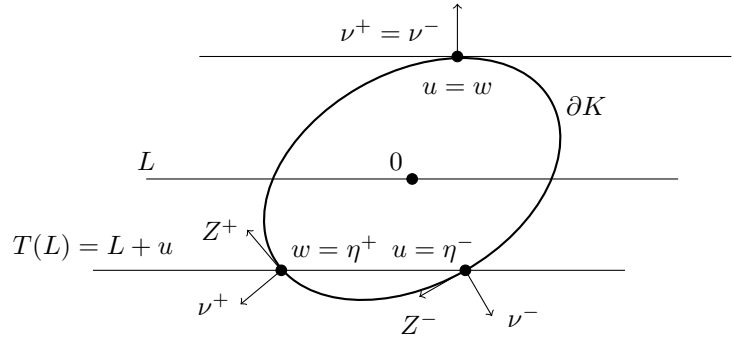

\section{Examples of entire $K$-perimeter minimizing horizontal graphs with one singular line}
\label{sec:one_singular_line}

Remark~\ref{rk:Kcond} implies that $Z^-$ can be uniquely determined from $Z^+$ when $S$ is a stationary surface. Let us see that this result can be refined to provide a smooth dependence of the oriented angle $\angle(v,Z^-)$ in terms of $\angle(v,Z^+)$. We use complex notation for horizontal vectors assuming that the horizontal distribution is positively oriented by $v,J(v)$ for any $v\in \mathcal{H}\smallsetminus\{0\}$.

\begin{lemma}\label{lem:ang}
Let $K$ be a convex body of class $C^2_+$ with $0\in\intt(K)$. Consider a unit vector $v\in\rr^2$ and let $L\subset\rr^2$ be the vector line generated~by $v$. Then, for any $\alpha\in (0,\pi)$ there exists a unique $\beta\in (\pi,2\pi)$ such that if $Z^+=v e^{i \alpha}$, $Z^-=v e^{i  \beta}$, then $\pi(J(Z^+))- \pi(J(Z^+))$ belongs to $L$.

Moreover the function $\beta: (0,\pi)\to (\pi,2\pi)$ is of class $C^1$ with negative derivative. 
\end{lemma}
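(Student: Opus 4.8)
The existence and uniqueness of $\beta\in(\pi,2\pi)$ for each $\alpha\in(0,\pi)$ is essentially a restatement of Lemma~\ref{lem:maincond} together with Remark~\ref{rk:Kcond}: setting $u=\pi(J(Z^+))\in\ptl K$, the lemma produces a \emph{unique} $w\in\ptl K$ with $w\neq u$ and $w-u\in L$ (the degenerate case is excluded because $Z^+$ meets $S_0$ transversally, so $J(Z^+)$ is not orthogonal to $L$), and then $Z^-$ is recovered as $-J(\pi^{-1}(w))$ normalized; one checks that $Z^-$ lies strictly on the opposite side of $L$ from $Z^+$, so its angle with $v$ lies in $(\pi,2\pi)$. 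The only genuinely new content is therefore the \emph{$C^1$ regularity of $\beta$ and the sign of $\beta'$}.

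\textbf{Regularity via the implicit function theorem.} The plan is to write the stationary condition $\pi(J(Z^+))-\pi(J(Z^-))\in L$ as the vanishing of a scalar function and apply the implicit function theorem. Concretely, parameterize $\ptl K$ by the inverse Gauss map: for $\theta\in\sph^1$ let $\gamma(\theta)=N_K^{-1}(\theta)\in\ptl K$, which is $C^1$ since $\ptl K$ is $C^2_+$ (indeed $\gamma'(\theta)$ is a positive multiple of the tangent to $\ptl K$, nonvanishing by strict convexity of the geodesic curvature). Writing $Z^+=ve^{i\alpha}$, we have $J(Z^+)=ve^{i(\alpha+\pi/2)}$ up to orientation conventions, hence $\pi(J(Z^+))=\gamma\big(e^{i(\alpha+\pi/2)}\big)$; similarly $\pi(J(Z^-))=\gamma\big(e^{i(\beta+\pi/2)}\big)$. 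Let $v^\perp=J(v)$ and define
\[
F(\alpha,\beta)=\Big\langle \gamma\big(e^{i(\alpha+\pi/2)}\big)-\gamma\big(e^{i(\beta+\pi/2)}\big),\, v^\perp\Big\rangle,
\]
so that the condition ``$\pi(J(Z^+))-\pi(J(Z^-))\in L$'' is exactly $F(\alpha,\beta)=0$. This $F$ is $C^1$ on $(0,\pi)\times(\pi,2\pi)$. At a solution point, $\partial F/\partial\beta=-\langle \gamma'(e^{i(\beta+\pi/2)})\, i e^{i(\beta+\pi/2)},\,v^\perp\rangle$, which is the component of the tangent line to $\ptl K$ at $w=\gamma(e^{i(\beta+\pi/2)})$ in the direction $v^\perp$; this is nonzero precisely because $L$ (spanned by $v$) is \emph{not} parallel to the support line of $K$ at $w$ — and that non-parallelism is guaranteed by case~(2) of Lemma~\ref{lem:maincond}, since $w\neq u$. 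Hence the implicit function theorem applies and gives $\beta\in C^1$ with
\[
\beta'(\alpha)=-\frac{\partial F/\partial\alpha}{\partial F/\partial\beta}.
\]

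\textbf{Sign of the derivative.} It remains to show $\beta'(\alpha)<0$. Both $\partial F/\partial\alpha$ and $\partial F/\partial\beta$ are inner products of $v^\perp$ with the (oriented) tangent vectors of $\ptl K$ at $u$ and at $w$ respectively, and the key geometric fact is that $u$ and $w$ lie on \emph{opposite} sides of the line $T(L)=L+u$ locally along $\ptl K$ — more precisely, as $\theta$ increases the curve $\ptl K$ crosses $T(L)$ transversally at both $u$ and $w$, and by convexity it crosses in opposite normal directions at the two crossing points of a chord. This forces $\partial F/\partial\alpha$ and $\partial F/\partial\beta$ to have opposite signs, whence $\beta'<0$. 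I expect this sign analysis to be the main obstacle: one must track orientations carefully (the complex-notation conventions for $J$ and $\pi$, the direction in which $N_K^{-1}$ traverses $\ptl K$, and which of $u,w$ is the ``entry'' versus ``exit'' point of the chord), and the cleanest way is probably to use strict convexity directly — writing $\gamma$ in terms of the support function $h_K$, so that $\gamma(\theta)=h_K(\theta)\theta+h_K'(\theta)\theta^\perp$ and $\gamma'(\theta)=(h_K(\theta)+h_K''(\theta))\theta^\perp$ with $h_K+h_K''>0$ by $C^2_+$, and then reducing the sign of $\beta'$ to the sign of a ratio of two values of the positive function $h_K+h_K''$ times a purely trigonometric factor that is manifestly negative on the relevant ranges of $\alpha$ and $\beta$.
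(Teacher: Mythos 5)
Your proposal follows essentially the same route as the paper: the paper also proves $C^1$ regularity by applying the implicit function theorem to the scalar function measuring the component of $\pi(J(Z^+))-\pi(J(Z^-))$ orthogonal to $L$ (in the paper, coordinates are chosen so that $L=\{y=0\}$ and the function is $y(\beta)-y(\alpha)$ with $(x(\alpha),y(\alpha))=N_K^{-1}(ve^{i(\alpha+\pi/2)})$, $C^1$ because $K$ is $C^2_+$), and the sign of $\beta'$ comes from the fact that this height function is strictly monotone with opposite monotonicity on the two arcs of $\ptl K$ determined by the chord direction, i.e.\ exactly your ``opposite crossing directions'' observation; the existence/uniqueness part is, as you say, Lemma~\ref{lem:maincond} (with case~(1) excluded simply because $\alpha\in(0,\pi)$ forces $J(Z^+)$ not to be orthogonal to $L$). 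The one concrete slip is in your sign bookkeeping: with your $F(\alpha,\beta)=\langle\gamma(e^{i(\alpha+\pi/2)})-\gamma(e^{i(\beta+\pi/2)}),v^\perp\rangle$, the partial $\ptl F/\ptl\beta$ is \emph{minus} the inner product of $v^\perp$ with the tangent vector at $w$, so the correct conclusion from the opposite crossing directions is that $\ptl F/\ptl\alpha$ and $\ptl F/\ptl\beta$ have the \emph{same} sign, whence $\beta'=-(\ptl F/\ptl\alpha)/(\ptl F/\ptl\beta)<0$; as literally written (``opposite signs, whence $\beta'<0$'') the step contradicts your own IFT formula and would give $\beta'>0$. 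You flag this yourself, and your proposed support-function computation does close it cleanly: $\gamma'(\theta)=(h_K+h_K'')(\theta)\,\theta^\perp$ gives $\langle\gamma'(\theta_\alpha),J(v)\rangle=-(h_K+h_K'')(\theta_\alpha)\sin\alpha<0$ and $\langle\gamma'(\theta_\beta),J(v)\rangle=-(h_K+h_K'')(\theta_\beta)\sin\beta>0$ for $\theta_\alpha=\alpha+\pi/2$, $\theta_\beta=\beta+\pi/2$, so
\[
\beta'(\alpha)=\frac{(h_K+h_K'')(\theta_\alpha)}{(h_K+h_K'')(\theta_\beta)}\cdot\frac{\sin\alpha}{\sin\beta}<0,
\]
which is precisely the paper's $d\beta/d\alpha=(dy/d\alpha)/(dy/d\beta)<0$. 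With that sign correction (and a one-line verification, via the supporting-hyperplane inequality at the two endpoints of the chord, that $w$ indeed lies on the arc corresponding to $\beta\in(\pi,2\pi)$, which you only assert), your argument is complete and coincides with the paper's.
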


\pagebreak

\begin{proof}
We change coordinates so that $L$ is the line $y=0$. We observe that $Z^+=ve^{i\alpha}$ implies that $J(Z^+)=ve^{i(\alpha+\pi/2)}$. We define $(x,y):\sph^1\to\ptl K$ by
\[
(x(\alpha),y(\alpha))=N_K^{-1}(ve^{i(\alpha+\pi/2)}),
\]
where $N_K:\ptl K\to\sph^1$ is the (outer) Gauss map of $\ptl K$. The functions $x,y$ are $C^1$ since $N_K$ is $C^1$. The point $(x(\alpha),y(\alpha))$ is the only one in $\ptl K$ such that the clockwise oriented tangent vector to $\ptl K$ makes an angle $\alpha$ with the positive direction of the line $L$. A line parallel to $L$ meets $\ptl K$ at a single point only when $\alpha+\pi/2=\pi/2$ or $\alpha+\pi/2=3\pi/2$. Hence, for $\alpha\in (0,\pi)$, there is a unique $\beta\in (\pi,2\pi)$ such that 
\[
(x(\beta),y(\beta))-(x(\alpha),y(\alpha))\in L.
\]
Observe that, for $\alpha\in (0,\pi)$, we have $dy/d\alpha>0$ and, for $\beta\in (\pi,2\pi)$, we get $dy/d\beta<0$. We can use the implicit function theorem (applied to $y( \beta)-y(\alpha)$) to conclude that $\beta$ is a $C^1$ function of $\alpha$. Moreover
\[
\frac{d\beta}{d\alpha}=\frac{dy/d\alpha}{dy/d\beta}<0,
\]
as desired.
\end{proof}

Now we give the main construction in this section.

We fix a vector $v\in\rr^2\smallsetminus\{0\}$ and the line $L_v=\{\lambda v: \lambda\in \rr\}$. For every $\lambda\in\rr$, we consider two half-lines, $r_\lambda^+,r_\lambda^-\subset\rr^2$, extending from the point $p=\lambda v\in L_v$ with angles $\alpha(\lambda)$ and $\beta(\lambda)$ respectively. Here $\alpha:\rr\to (0,\pi)$ is a non-increasing function and $\beta(\la)$ is the composition of $\alpha(\lambda)$ with the function obtained in Lemma~\ref{lem:ang}. Hence $\beta(\lambda)$ is a non-decreasing function. The line $L_v$ can be lifted to the horizontal straight line $R_v=L_v\times\{0\}\subset\hh^1$ passing through the point $(0,0,0)$, and the half-lines $r^\pm_\lambda$ can be lifted to horizontal half-lines $R^\pm_\lambda$ starting from  the point $(\lambda v,0)$ in the line $R_v$.

The surface obtained as the union of the half-lines $R_\lambda^+$ and $R_\lambda^-$, for $\lambda\in\rr$,  is denoted by $\Sigma_{v,\alpha}$. Since any $R^\pm_\lambda$ is a graph over $r_\lambda^\pm$ and $\bigcup_{\lambda\in\rr}(r_\lambda^+\cup r_\lambda^-)$ covers the $xy$-plane, we can write the surface $\Sigma_{v,\alpha}$ as the graph of a continuous function $u_\alpha:\rr^2\to\rr$.
Writing $v=e^{i\alpha_0}$, the surface $\Sigma_{v,\alpha}$ can be parametrized by $\Psi:\rr^2\to\rr^3$ as follows
\begin{equation}
\label{eq:param}
\Psi(\lambda,\mu)= \begin{cases}
\big(\lambda e^{i\alpha_0}+\mu  e^{i(\alpha_0+\alpha(\lambda))},- \mu\lambda \sin \alpha(\lambda) \big),  & \mu\geq 0, \\
 \big(\lambda e^{i \alpha_0}+|\mu|  e^{i(\alpha_0+\beta(\lambda))},- |\mu|\lambda \sin \beta(\lambda )\big), & \mu\leq 0.
\end{cases}
\end{equation}

\pagebreak

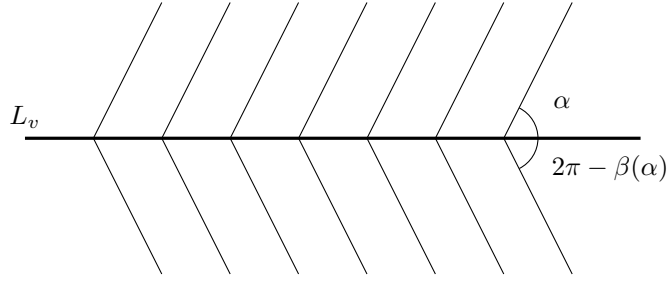
\begin{figure}[h]

\centering

\begin{tikzpicture}[scale=0.9]

\draw[very thick] (5,0) -- (-4,0) node[above]{$L_v$};

\draw (-3,0) -- (-2,2);
\draw (-2,0) -- (-1,2);
\draw (-1,0) -- (0,2);
\draw (0,0) -- (1,2);
\draw (1,0) -- (2,2);
\draw (2,0) -- (3,2);
\draw (3,0) -- (4,2);
\draw (3.5,0) arc [start angle=0, end angle=63, radius=0.5] node[xshift=16,yshift=2]{$\alpha$};
\draw (3.5,0) arc [start angle=0, end angle=-63, radius=0.5] node[xshift=34,yshift=0]{$2\pi-\beta(\alpha)$};

\draw (-3,0) -- (-2,-2);
\draw (-2,0) -- (-1,-2);
\draw (-1,0) -- (0,-2);
\draw (0,0) -- (1,-2);
\draw (1,0) -- (2,-2);
\draw (2,0) -- (3,-2);
\draw (3,0) -- (4,-2);

\end{tikzpicture}

\caption{The planar configuration to obtain the surface $\Sigma_{v,\alpha}$. Here $\alpha$ is a constant function and $K$ is the unit disk $D$. Such surfaces were called \emph{herringbone surfaces} by Young \cite{Young2020} as they are the union of horizontal rays that branch out of a horizontal line.}

\end{figure}

\begin{example}
A special example to be considered is the sub-Riemannian cone $\Sigma_\alpha$, where $\alpha\in (0,\pi)$. The projection of $\Sigma_\alpha$ to the horizontal plane $t=0$ is composed of the line $y=0$ and the half-lines starting from points in $y=0$ with angles $\alpha$ and $-\alpha$. This cone can be parametrized, for $s\in\rr, t\ge 0$, by
\begin{equation*}
(u,v)\mapsto (u+v\cos\alpha,v\sin\alpha,-uv\sin\alpha)
\end{equation*}
when $y\ge 0$, and by
\begin{equation*}
(u+v\cos\alpha,-v\sin\alpha,uv\sin\alpha)
\end{equation*}
when $y\le 0$. A straigthforward computation implies that $\Sigma_\alpha$ is the $t$-graph of the function
\begin{equation}
\label{eq:ualpha}
u_\alpha(x,y)=-xy+\cot\alpha \, y|y|.
\end{equation}

Observe that
\begin{equation}
\lim_{\alpha\to 0} u_\alpha(x,y)=\begin{cases} +\infty, & y>0, \\
0, &y=0, \\
-\infty, & y<0,
\end{cases}
\end{equation}
so that the subgraph of $\Sigma_\alpha$ converges pointwise locally when $\alpha\to 0$ to a vertical half-space.
\end{example}

 The following rsult provides some properties of $u_\alpha$ when $\alpha(\la)$ is a smooth~function of $\la$.
\begin{proposition}
\label{prop:one_singular_line}
Let $\alpha\in C^k(\rr)$, $k\geq2$, be a non-decreasing function. Then
\begin{enumerate}
\item[\emph{i)}] $u_\alpha$ is a $C^k$ function in $\rr^2\setminus L_v$,
\item[\emph{ii)}] $u_\alpha$ is merely $C^{1,1}$ near $L_v$ when $\beta\neq \alpha+\pi$.
\item[\emph{iii)}] $u_\alpha$ is $C^\infty$ in any open set $I$ of values of $\lambda$ when $\beta= \alpha+\pi$ on $I$.
\item[\emph{iv)}] 
$\Sigma_{v,\alpha}$ is $K$-perimeter-minimizing when $\beta= \beta(\alpha)$.
\item[\emph{v)}] The projection of the singular set of $\Sigma_{v,\alpha}$ to the $xy$-plane is $L_v$.
\end{enumerate}
\end{proposition}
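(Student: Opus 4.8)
The plan is to work throughout with the explicit parametrization \eqref{eq:param}, after rotating coordinates so that $v=(1,0)$ (hence $\alpha_0=0$); write $e_\theta=(\cos\theta,\sin\theta)$. On $\{\mu>0\}$ the projection of $\Psi$ to the $xy$-plane is $F^+(\lambda,\mu)=(\lambda+\mu\cos\alpha(\lambda),\,\mu\sin\alpha(\lambda))$, whose Jacobian determinant is $\sin\alpha(\lambda)-\mu\,\alpha'(\lambda)$; the monotonicity hypothesis on $\alpha$ keeps this bounded away from $0$ on compact $\lambda$-sets, and solving $\lambda+y_0\cot\alpha(\lambda)=x_0$ --- whose left-hand side is a strictly monotone function of $\lambda$ onto all of $\rr$ --- shows $F^+$ is a bijection onto $\{y>0\}$. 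Thus $F^+$ is a $C^k$ diffeomorphism, and since the $t$-component of $\Psi$ is $C^k$, the function $u_\alpha=(t\text{-component})\circ(F^+)^{-1}$ is $C^k$ on $\{y>0\}$; the same reasoning applies on $\{y<0\}$ with $\beta$ in place of $\alpha$, which proves (i). For (v), recall that a point of a $t$-graph $\{t=u\}$ is singular exactly when $u_x=y$ and $u_y=-x$; one reads off from \eqref{eq:param} that $\nabla u_\alpha=(0,-x)$ along $L_v$, so $L_v$ is singular, while inserting the expression for $\nabla u_\alpha$ on $\{\mu>0\}$ into $u_x=y$, $u_y=-x$ reduces both equations to $\mu\bigl(2\sin\alpha(\lambda)-\mu\,\alpha'(\lambda)\bigr)=0$, forcing $\mu=0$ by monotonicity; symmetrically on $\{\mu<0\}$. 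Hence the singular set projects precisely onto $L_v$.

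For (ii)--(iii) we expand $u_\alpha$ near $L_v$, i.e.\ as $\mu\to0^\pm$. Computing $\nabla u_\alpha$ along the characteristic rays --- the ray through $(\lambda,0)$ carries, via the contact equation $dt=y\,dx-x\,dy$, the height $t=-\mu\lambda\sin\alpha(\lambda)$ --- gives that $\nabla u_\alpha$ extends continuously across $L_v$ with common value $(0,-x)$, so $u_\alpha\in C^1(\rr^2)$; since $\nabla u_\alpha$ is $C^{k-1}$ on each side ($k\ge2$) and continuous across $L_v$, it is locally Lipschitz, i.e.\ $u_\alpha\in C^{1,1}_{\mathrm{loc}}$. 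One order further in the expansion shows that at the point of $L_v$ of parameter $\lambda$ the one-sided limits of $\partial_{yy}u_\alpha$ are $2\cot\alpha(\lambda)$ from $y>0$ and $2\cot\beta(\lambda)$ from $y<0$, so the jump $2(\cot\alpha-\cot\beta)$ vanishes precisely when $\beta=\alpha+\pi$; hence if $\beta\neq\alpha+\pi$ somewhere on $L_v$ then $u_\alpha$ is not $C^2$ there, giving (ii). If instead $\beta=\alpha+\pi$ on an open set $I$ of $\lambda$-values, the rays $r_\lambda^+$ and $r_\lambda^-$ line up into one straight line and \eqref{eq:param} collapses to $\Psi(\lambda,\mu)=(\lambda v+\mu e_{\alpha(\lambda)},\,-\mu\lambda\sin\alpha(\lambda))$ for all $\mu\in\rr$; moreover, combining $\beta=\alpha+\pi$ with the relation $\beta=\beta(\alpha)$ of the construction and Lemma~\ref{lem:ang} forces $\alpha$ to equal on $I$ the unique constant solving $\beta(a)=a+\pi$, whence on $I$ the map $\Psi$ is an affine reparametrization of $t=-\mu\lambda\sin a$ and $u_\alpha$ is there a quadratic polynomial, giving (iii).

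For (iv) we build a calibration. Extend the sub-Finsler Gauss map of $\Sigma_{v,\alpha}$ to $\hh^1$ by $\Xi(x,y,t):=\pi\bigl((y-\partial_xu_\alpha(x,y),\,-(x+\partial_yu_\alpha(x,y)))\bigr)$, independent of $t$; this is well defined off $L_v$ and $\norm{\Xi_p}_K=1$ for every $p$ since $\pi$ takes values in $\partial K$, so $\norm{\Xi}_{K,\infty}\le1$. On $\Sigma_{v,\alpha}$ the horizontal normal points in the direction $(y-\partial_xu_\alpha,\,-(x+\partial_yu_\alpha))$, so $\escpr{N_h,\Xi}=\escpr{N_h,\pi(N_h)}=\norm{N_h}_{K,*}$. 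Because $\Xi$ is $t$-independent, $\divv\Xi=\partial_x\xi_1+\partial_y\xi_2$; this vanishes off $L_v$ because $\Xi$ is constant along each characteristic ray (there $N_h$, hence $\pi(N_h)$, depends only on $\lambda$) and its $\lambda$-derivative --- tangent to $\partial K$, hence parallel to the ray direction --- is annihilated by $\nabla\lambda$; equivalently, $u_\alpha$ solves $H_K=0$ off $L_v$, the rays being characteristic horizontal straight lines (so $\nabla_Z\pi(\nu_h)$ is a multiple of $T$). Across $L_v$ the distributional divergence acquires a term supported on $L_v$ with density $\xi_2^+-\xi_2^-$, namely the $v^\perp$-component of $\pi(J(Z^+))-\pi(J(Z^-))$, which vanishes exactly because $\beta=\beta(\alpha)$ (Lemma~\ref{lem:ang}, i.e.\ condition (2) of Corollary~\ref{cor:minimal}). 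Hence $\divv\Xi=0$ distributionally on $\hh^1$, and the usual calibration argument --- comparing $P_K(E_\alpha,B_R)$ for $E_\alpha=\{t<u_\alpha\}$ against competitors agreeing with $E_\alpha$ outside $B_R$, with a cutoff to absorb the non-compactness of $E_\alpha$ --- gives that $E_\alpha$ is $K$-perimeter-minimizing, i.e.\ $\Sigma_{v,\alpha}$ is area-minimizing.

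The two genuinely computational points are the second-order expansion of $u_\alpha$ across $L_v$, which produces the jump $2(\cot\alpha-\cot\beta)$ of $\partial_{yy}u_\alpha$ and thereby pins down both the sharp $C^{1,1}$ regularity and the exceptional smooth case, and, in (iv), the verification that $\divv\Xi$ has no singular part along $L_v$. I expect this last point --- which is precisely where the stationarity condition $\beta=\beta(\alpha)$ of Lemma~\ref{lem:ang} enters, and is the conceptual reason these low-regularity graphs are nevertheless minimizers --- together with making the calibration argument rigorous at merely $C^{1,1}$ regularity, to be the main obstacle.
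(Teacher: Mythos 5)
Your treatment of iv) is essentially the paper's own proof: the paper also calibrates with the $t$-independent field obtained by vertically translating $\pi(\nu_E)$ on each open half-space $P^{1}=\{\escpr{z,v}>0\}$, $P^{2}=\{\escpr{z,v}<0\}$, gets $\divv U^{j}=0$ there (quoting Theorem~3.14 of \cite{PozueloRitore2021}, where you verify it directly via $\tfrac{d}{d\lambda}\pi(\nu_h)\parallel Z\perp\nabla\lambda$ --- a correct and equivalent argument), and uses exactly the cancellation $\escpr{\eta^{+}-\eta^{-},J(v)}=0$ supplied by Lemma~\ref{lem:ang}. The only organizational difference is that you package this as a single discontinuous calibration with no singular part in its distributional divergence, while the paper keeps the two fields $U^{1},U^{2}$ and applies the divergence theorem (Theorem~2.1 of \cite{MR2448649}) on $F\cap P^{j}\cap B$ for a competitor $F$ agreeing with $E$ outside a Euclidean ball, sums the two identities so that the interface terms over $P\cap B$ cancel, and concludes by the sub-Finsler Cauchy--Schwarz inequality with equality for $E$; the ``main obstacle'' you flag (rigor at mere $C^{1,1}$ regularity, absence of an interface contribution) is handled in the paper precisely by this half-space bookkeeping, which you would need to spell out to finish. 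For i), ii), iii), v) the paper simply cites Lemma~3.1 of \cite{MR2448649} (the sub-Riemannian case, where $\beta=2\pi-\alpha$), whereas you reprove them directly; your computations check out (Jacobian $\sin\alpha-\mu\alpha'$, continuity of $\nabla u_\alpha$ across $L_v$ with value $(0,-x)$, singular-set equation $\mu(2\sin\alpha-\mu\alpha')=0$, one-sided limits $2\cot\alpha$ and $2\cot\beta$ of $\partial_{yy}u_\alpha$, and the observation that $\beta(\alpha)=\alpha+\pi$ together with $\beta'<0$ forces $\alpha$ to be constant on $I$ --- a point that is genuinely needed in the sub-Finsler setting and is not literally covered by the cited sub-Riemannian lemma), so this part is a more self-contained route than the paper's.

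Two caveats, both traceable to the paper's statement rather than to your argument, should still be acknowledged: your positivity of the Jacobian and of $2\sin\alpha-\mu\alpha'$ uses $\alpha'\le 0$, i.e.\ the non-increasing convention of the construction in Section~4, not the ``non-decreasing'' hypothesis as literally written in the proposition; and $C^{k}$ regularity of $u_\alpha$ on $\{y<0\}$ requires $\beta\circ\alpha\in C^{k}$, while Lemma~\ref{lem:ang} only yields $\beta$ of class $C^{1}$ for $K$ of class $C^{2}_{+}$, so either additional smoothness of $\ptl K$ is implicitly assumed or the regularity on that side must be stated accordingly.
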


\pagebreak

\begin{proof}
i), ii), iii) and v) are proven in Lemma 3.1 in \cite{MR2448649}.

We prove iv) by a calibration argument. We shall drop the subscript $\alpha$ to simplify the notation. Let $E$ be the subgraph of $u$ and $F\subseteq\hh^1$ such that $F=E$ outside a Euclidean ball centered at the origin. Let $P=\{(z,t) : \escpr{z,v}=0\}$, $P^1=\{(z,t) : \escpr{  z,v}>0\}$ and $P^2=\{(z,t) : \escpr{  z,v}<0\}$. We define two vector fields $U^1$, $U^2$ on $P^1$, $P^2$ respectively by vertical translations of the vectors $\pi(\nu_E)|_{P^1}=\eta^+$ and $\pi(\nu_E)|_{P^2}=\eta^-$. They are $C^2$ in the interior of the half-spaces and extend continuously to the boundary plane $P$. As $\divv (U^j)_{(z,t)}$ coincides with the sub-Finsler mean curvature of the translation of $\Sigma_{v,\alpha}$ passing through $(z,t)$ as defined in \cite{PozueloRitore2021}, and these surfaces are foliated by horizontal straight lines in the interior of the half-spaces, by Theorem 3.14 in \cite{PozueloRitore2021} we get
\[
\divv U^j=0 \quad j=1,2.
\]
Here $\divv U$ is the Riemannian divergence of the vector field $U$. 
We apply the divergence theorem (Theorem 2.1 in \cite{MR2448649}) to get
\begin{equation*}
0=\int_{F\cap P^j \cap B}\divv U^j=\int_F\escpr{U^j,\nu_{P^j\cap B}}|\partial (P^j\cap B)|+\int_ {P^j\cap B}\escpr{U^j,\nu_F}|\partial F|.
\end{equation*}
Let $C=P\cap\bar{B}$. Then, for every $p\in C$, we have $\nu_{P^1\cap B}=J(v)$ is a normal vector to the plane $P$ and $\nu_{P^2\cap B}=-J(v)$, $U^1=\eta^+$ and $U^2=\eta^-$. 
Hence, by Lemma \ref{lem:ang}, we get
\[
\escpr{U^1,\nu_{P^1\cap B}}+\escpr{U^2,\nu_{P^2\cap B}}=\escpr{\eta^+-\eta^-,J(v)}=0 \quad p\in C.
\] 
Adding the above integrals we obtain
\begin{equation}\label{eq:2}
0=\sum_{j=1,2}\int_F\escpr{U^j,\nu_B}d|\partial B|+\int_{B\cap\text{int}(H^j)}\escpr{U^j,\nu_F} d|\partial F|.
\end{equation}

From the Cauchy-Schwarz inequality and the fact that $|\partial F|$ is a positive measure, we get that
\begin{equation}\label{eq:3}
\sum_{j=1,2}\int_{B\cap P^j}\escpr{U^j,\nu_F} d|\partial F|\leq P_K( F,B).
\end{equation}
In particular, if we apply the same reasoning to $E$, equality holds and
\begin{equation}\label{eq:4}
0=\sum_{j=1,2}\int_E\escpr{U^j,\nu_B}d|\partial B|+P_K(E,B).
\end{equation}
From \eqref{eq:2}, \eqref{eq:3}, \eqref{eq:4} and the fact that $F=E$ in the boundary of $B$, we get
\[
P_K(E,B)\leq P_K(F,B),
\]
as desired.
\end{proof}

The general properties of $\Sigma_{v,\alpha}$ when $\alpha$ is only continuous are given in the following result.
\begin{proposition}
\label{prop:cont}
Let $\alpha:\rr\to\rr$ be a continuous and non-decreasing function.
Then
\begin{enumerate}
\item[\emph{i)}] $u_\alpha$ is locally Lipschitz in Euclidean sense,
\item[\emph{ii)}] $E_\alpha$ is a set of locally finite perimeter in $\hh^1$, and 
\item[\emph{iii)}] $\Sigma_{v,\alpha}$ is $K$-perimeter-minimizing in $\hh^1$.
\end{enumerate}
\end{proposition}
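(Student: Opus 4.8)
The plan is to obtain Proposition~\ref{prop:cont} from Proposition~\ref{prop:one_singular_line} by an approximation argument, exploiting the monotonicity of $\alpha$ and the closure/lower-semicontinuity properties of the $K$-perimeter. First I would treat (i): since $\alpha$ is continuous and non-decreasing with values in $(0,\pi)$, the lifted half-lines $R^\pm_\lambda$ vary continuously, and the explicit parametrization \eqref{eq:param} shows that on each compact set the slopes $\cot\alpha(\lambda)$, $\cot\beta(\lambda)$ are bounded; a direct estimate on $\Psi$ (or the cited Lemma~3.1 in \cite{MR2448649}, whose Lipschitz conclusion does not need differentiability of $\alpha$) gives that $u_\alpha$ is locally Lipschitz. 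Then (ii) is a consequence of (i): the subgraph $E_\alpha$ of a locally Lipschitz function has locally finite Euclidean perimeter, hence locally finite $H$-perimeter $P_K$, because the horizontal perimeter of a set is controlled by its Euclidean perimeter on bounded sets (the density $\norm{N_h}_{K,*}$ is bounded on any compact region since $K$ is a fixed convex body with $0$ in its interior).

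The heart of the matter is (iii), the minimality. Here I would approximate the continuous non-decreasing $\alpha$ by a sequence $\alpha_j\in C^k(\rr)$, $k\ge 2$, of non-decreasing functions with $\alpha_j\to\alpha$ uniformly on compact sets and $\alpha_j(\rr)\subset(0,\pi)$ (e.g. by mollification, which preserves monotonicity, after a harmless truncation away from the endpoints $0,\pi$). By Proposition~\ref{prop:one_singular_line}(iv) each $\Sigma_{v,\alpha_j}$ is $K$-perimeter-minimizing, i.e.\ the subgraph $E_{\alpha_j}$ minimizes $P_K(\cdot,B)$ among competitors agreeing with $E_{\alpha_j}$ outside $B$, for every Euclidean ball $B$. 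From the uniform convergence $\alpha_j\to\alpha$ on compacts and the parametrization \eqref{eq:param} one gets $u_{\alpha_j}\to u_\alpha$ locally uniformly, hence $\mathbf 1_{E_{\alpha_j}}\to\mathbf 1_{E_\alpha}$ in $L^1_{loc}(\hh^1)$ (the symmetric differences are contained in thin slabs whose measure tends to $0$). The standard closure theorem for perimeter-minimizers under $L^1_{loc}$-convergence — available here because $P_K$ is lower semicontinuous with respect to $L^1_{loc}$ convergence, being a supremum of continuous linear functionals, and because competitor modifications can be transplanted using $|E_{\alpha_j}\triangle E_\alpha|\to0$ together with a diagonal/cutoff argument on a slightly larger ball — then yields that $E_\alpha$ is itself $K$-perimeter-minimizing.

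The main obstacle I expect is making the closure step fully rigorous in the sub-Finsler setting: one must ensure that a competitor $F$ for $E_\alpha$ in $B$ can be turned into an admissible competitor $F_j$ for $E_{\alpha_j}$ in a fixed slightly larger ball $B'$ with $P_K(F_j,B')\to P_K(F,B')$ and controlled perimeter on the annulus $B'\setminus B$. The usual device is to interpolate between $F$ and $E_{\alpha_j}$ across the annulus using the coarea formula to pick a good radius $\rho\in(r,r')$ where the boundaries are not too large, so that the gluing costs at most $\mathcal H^{n}$ of a sphere plus $|E_{\alpha_j}\triangle E_\alpha|/(r'-r)$, which tends to $0$. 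Combined with lower semicontinuity $P_K(E_\alpha,B)\le\liminf_j P_K(E_{\alpha_j},B)\le\liminf_j P_K(F_j,B)= P_K(F,B)$, this closes the argument. A secondary, more bookkeeping point is to confirm that the singular set of $\Sigma_{v,\alpha}$ still projects onto all of $L_v$ (so the construction genuinely produces a horizontal graph with a singular line, not a smooth surface) and that $\beta(\lambda)$ remains well-defined and non-decreasing for merely continuous $\alpha$, which is immediate from Lemma~\ref{lem:ang} since $\beta(\cdot)$ there is a $C^1$, in particular continuous and monotone, function of the angle.
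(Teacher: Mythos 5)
Your proposal is correct in outline and, for the crucial part (iii), takes a genuinely different route from the paper in the limiting step, although the starting point is the same: both you and the authors mollify $\alpha$ to obtain smooth non-decreasing approximations $\alpha_\varepsilon$ (with $\beta(\alpha_\varepsilon)$ controlled through Lemma~\ref{lem:ang}) and both finish with lower semicontinuity of $P_K$ under $L^1_{loc}$ convergence of the subgraphs. The difference is how the minimality of the approximants is used. You treat Proposition~\ref{prop:one_singular_line}(iv) as a black box (each $E_{\alpha_j}$ minimizes among its own competitors) and then invoke the standard closure theorem for perimeter minimizers, transplanting a competitor $F$ of $E_\alpha$ into competitors of $E_{\alpha_j}$ by cut-and-paste across a sphere of good radius chosen via the coarea formula, with the gluing error controlled by $|E_{\alpha_j}\triangle E_\alpha|\to 0$. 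The paper never modifies the competitor: it re-runs the calibration argument of Proposition~\ref{prop:one_singular_line}(iv) with the divergence-free fields $U^j_\varepsilon$ built from $E_\varepsilon$ but tested, via the divergence theorem and Cauchy--Schwarz, against the \emph{fixed} competitor $F$ of $E$, obtaining $P_K(E_\varepsilon,B)\le P_K(F,B)$ directly (strictly speaking up to a boundary term on $\partial B$ caused by $F=E\neq E_\varepsilon$ near $\partial B$, which vanishes as $\varepsilon\to 0$ and which the paper glosses over), and then concludes by lower semicontinuity. Your route is more generic and robust, needing only minimality of the approximants plus standard finite-perimeter machinery adapted to $P_K$ (comparability with the $H$-perimeter, locality, lsc), at the price of the gluing bookkeeping you correctly identified as the delicate point; the paper's route exploits the explicit calibration to avoid the cut-and-paste entirely and is shorter. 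For (i)--(ii) the paper simply cites Proposition~3.2 of \cite{MR2448649}; your direct sketch is consistent with that, though the shortcut ``the slopes $\cot\alpha$, $\cot\beta$ are bounded on compact sets'' needs the non-crossing fan structure of the lifted rays to guarantee that only a bounded range of the parameter $\lambda$ contributes over a fixed compact set of the $xy$-plane.
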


\begin{proof}
i) and ii) are proven in \cite{MR2448649}, Proposition 3.2. Let 
\[
\alpha_\varepsilon (x) = \int_\rr \alpha(y)\delta_\varepsilon (x-y) dy
\]
the usual convolution, where $\delta$ is a Dirac function and $\delta_\varepsilon=\frac{\delta(x/\varepsilon)}{\varepsilon}$. Then $\alpha_\varepsilon$ is a $C^\infty$ non-decreasing function and $\alpha_\varepsilon$ converges uniformly to $\alpha$ on compact sets of $\rr$. By Lemma \ref{lem:ang}, $\beta_\varepsilon=\beta(\alpha_\varepsilon)$ is a $C^1$ non-decreasing function. Since $\beta$ is $C^1$ with respect to $\alpha$ it follows the  uniform convergence on compact sets of $\beta_\varepsilon$ to a function $\bar{\beta}$.

Take $F\subset \hh^1$ so that $F=E$ outside a Euclidean ball centered at the origin. We follow the arguments of the proof of iv) in Proposition \ref{prop:one_singular_line} and define vector fields $\divv(U^j_\varepsilon)$ translating vertically $\pi(\nu_{E_\varepsilon})$, where $E_\varepsilon$ is the subgraph of $\Sigma_{\alpha_\varepsilon}$, to obtain by the divergence theorem
\[
\sum_{j=1,2}\int_{B\cap \text{int}(P^i)}\escpr{U_\varepsilon^j,\nu_{E_\varepsilon}}|\partial E_\varepsilon|=\sum_{j=1,2}\int_{B\cap \text{int}(P^i)}\escpr{U_\varepsilon^j,\nu_{F}}|\partial F|,
\]
the left hand side is the $K$-perimeter of $E_\varepsilon$, while the right hand side is trivially bounded by the $K$-perimeter of $F$. Therefore
\[
P_K( E_\varepsilon,B)\leq P_K( F,B).
\]
Since $E_\varepsilon$ converges uniformly in compact sets to $E$, we obtain the result. 
\end{proof}

We study now with some detail the case when $\Sigma_{v,\alpha}$ is a $C^\infty$ surface.

\begin{corollary}
\label{cor:cones}
When $\alpha$ is constant, the surface $\Sigma_{v,\alpha}$ is a $K$-perimeter-minimizing cone in $\hh^1$ of class $C^{1,1}$. The singular set is a horizontal straight line and the regular part of $\Sigma_{v,\alpha}$ is a $C^\infty$ surface.
\end{corollary}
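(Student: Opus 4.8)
The plan is to read off the regularity, the singular set, and the minimality directly from Proposition~\ref{prop:one_singular_line}, and then to verify by hand from the parametrization \eqref{eq:param} that $\Sigma_{v,\alpha}$ is invariant under the non-isotropic dilations of $\hh^1$, which is exactly the statement that it is a cone.

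First I would note that a constant function $\alpha(\lambda)\equiv\alpha_1\in(0,\pi)$ lies in $C^k(\rr)$ for every $k$ and is (weakly) non-decreasing, so Proposition~\ref{prop:one_singular_line} applies with arbitrarily large $k$. Its part (i) gives $u_\alpha\in C^\infty(\rr^2\setminus L_v)$, so the $t$-graph of $u_\alpha$ over $\rr^2\setminus L_v$ --- which by part (v) is precisely the regular part of $\Sigma_{v,\alpha}$ --- is a $C^\infty$ surface. Part (ii) gives $u_\alpha\in C^{1,1}$ in a neighbourhood of $L_v$; in the exceptional case $\beta=\alpha_1+\pi$ part (iii) even yields $C^\infty$ there, which is a fortiori $C^{1,1}$. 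Hence $\Sigma_{v,\alpha}$ is globally a $C^{1,1}$ hypersurface. Next, part (v) says the singular set projects onto $L_v$; since $\Sigma_{v,\alpha}$ is a graph and $\Psi(\lambda,0)=(\lambda v,0)$, the singular set is exactly the horizontal straight line $R_v=L_v\times\{0\}$ introduced after \eqref{eq:param}. Finally, because $\alpha$ is constant we have $\beta=\beta(\alpha)$ in the sense of Lemma~\ref{lem:ang} by construction, so part (iv) applies and $\Sigma_{v,\alpha}$ is $K$-perimeter-minimizing in $\hh^1$.

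It remains to show $\Sigma_{v,\alpha}$ is a cone, i.e.\ $\delta_r(\Sigma_{v,\alpha})=\Sigma_{v,\alpha}$ for every $r>0$, where $\delta_r(z,t)=(rz,r^2t)$ is the non-isotropic dilation of $\hh^1$ and $z\in\rr^2\cong\cc$ denotes the horizontal coordinates, as in \eqref{eq:param}. With $\alpha(\lambda)\equiv\alpha_1$ and $\beta(\lambda)\equiv\beta_1:=\beta(\alpha_1)$ both constant, substituting $\delta_r$ into \eqref{eq:param} and using $r>0$ gives, for $\mu\ge 0$,
\[
\delta_r\big(\Psi(\lambda,\mu)\big)=\big(r\lambda e^{i\alpha_0}+(r\mu)\,e^{i(\alpha_0+\alpha_1)},\,-(r\mu)(r\lambda)\sin\alpha_1\big)=\Psi(r\lambda,r\mu),
\]
and the same identity for $\mu\le 0$ with $\beta_1$ in place of $\alpha_1$. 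Thus $\delta_r$ maps the image of $\Psi$ onto itself; since $\Psi(0,0)=0\in\Sigma_{v,\alpha}$, the surface is a cone with vertex at the origin. Equivalently, $u_\alpha$ is positively $2$-homogeneous, consistent with the closed form \eqref{eq:ualpha} in the sub-Riemannian model.

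I do not expect a genuine obstacle: the regularity, the description of the singular set, and the minimality are immediate once one observes that a constant function is admissible in Proposition~\ref{prop:one_singular_line}, and the cone property reduces to the one-line substitution into \eqref{eq:param} displayed above. The only point deserving a word of care is that the $C^{1,1}$ regularity across $L_v$ is the sharp generic behaviour, the degenerate case $\beta=\alpha_1+\pi$ being the smooth (plane-like) one of part (iii); either way $\Sigma_{v,\alpha}$ is at least $C^{1,1}$, so the statement holds without exception.
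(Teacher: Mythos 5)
Your proposal is correct and is essentially the argument the paper intends: the corollary is stated without a separate proof precisely because, for constant $\alpha$, all the claims (regularity off $L_v$, $C^{1,1}$ across it, the singular line, and minimality via $\beta=\beta(\alpha)$) are read off from Proposition~\ref{prop:one_singular_line} exactly as you do. Your explicit check of dilation invariance, $\delta_r(\Psi(\lambda,\mu))=\Psi(r\lambda,r\mu)$ from \eqref{eq:param}, correctly supplies the cone property that the paper leaves implicit in the construction.
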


The following extends the already known result that in the sub-Riemannian setting the surfaces $\Sigma_{v,\pi/2}$ are $C^\infty$.
\begin{lemma}\label{lem:condreg}
Let $v\in\rr^2 \smallsetminus \{0\}$ and $\alpha\in (0,\pi)$ be fixed. If $K$ is centrally symmetric with respect to $O=\frac{1}{2}\eta^++\frac{1}{2}\eta^-$ then  $\beta(\alpha)=\alpha+\pi$, where $\eta^+=\pi(J(v e^{i\alpha}))$ and $\eta^-=\pi(J(v e^{i\beta}))$.
\end{lemma}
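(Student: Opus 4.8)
My plan is to compute the outer unit normals of $K$ at the two boundary points $\eta^+$ and $\eta^-$ and then compare them by means of the central symmetry. First, since both the definition of $\eta^\pm$ and the conclusion depend only on the direction of $v$ (and on $\alpha$), I may assume $|v|=1$. By the description of $\pi_K$ recalled in Section~\ref{sc:subfinsler}, and using that $J$ acts as multiplication by $e^{i\pi/2}$ in the complex notation of the paper, $\eta^+=\pi(J(ve^{i\alpha}))$ is the point of $\ptl K$ whose outer unit normal is $N_K(\eta^+)=J(ve^{i\alpha})=ve^{i(\alpha+\pi/2)}$ (note $|J(ve^{i\alpha})|=|v|=1$), and likewise $N_K(\eta^-)=ve^{i(\beta+\pi/2)}$, where $\beta=\beta(\alpha)\in(\pi,2\pi)$ is the angle given by Lemma~\ref{lem:ang}. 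Hence the statement reduces to showing $e^{i(\beta+\pi/2)}=e^{i(\alpha+3\pi/2)}$.

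The geometric input I would use is the elementary fact that a central symmetry of a convex body reverses its outer normals. If $K$ is centrally symmetric with respect to $O$, the map $\sigma(p)=2O-p$ is an affine involution with linear part $-\mathrm{Id}$ which carries $K$ onto $K$, hence $\ptl K$ onto $\ptl K$, and which sends a supporting half-space of $K$ at $p$ with outer normal $n$ to a supporting half-space of $K$ at $\sigma(p)$ with outer normal $-n$. Since $K$ is of class $C^2_+$, the outer unit normal at each boundary point is unique ($N_K$ is a diffeomorphism), so $N_K(\sigma(p))=-N_K(p)$ for every $p\in\ptl K$.

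I then apply this with $p=\eta^+$. The hypothesis $O=\tfrac12\eta^++\tfrac12\eta^-$ gives $\sigma(\eta^+)=2O-\eta^+=\eta^-$, hence $N_K(\eta^-)=N_K(\sigma(\eta^+))=-N_K(\eta^+)$. Substituting the formulas from the first paragraph and cancelling the nonzero factor $v$ yields $e^{i(\beta+\pi/2)}=-e^{i(\alpha+\pi/2)}=e^{i(\alpha+3\pi/2)}$, that is, $\beta\equiv\alpha+\pi\pmod{2\pi}$. Since $\alpha\in(0,\pi)$ we have $\alpha+\pi\in(\pi,2\pi)$, and $\beta\in(\pi,2\pi)$ by Lemma~\ref{lem:ang}, so necessarily $\beta=\beta(\alpha)=\alpha+\pi$. (Equivalently $Z^-=ve^{i\beta}=-ve^{i\alpha}=-Z^+$, in accordance with item~iii) of Proposition~\ref{prop:one_singular_line}.)

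There is no genuinely delicate step here. The only point I would state with care is the claim that a central symmetry of a $C^2_+$ convex body reverses outer unit normals — this is immediate from the affine action of $\sigma$ on supporting half-spaces, and it is precisely where the $C^2_+$ hypothesis (uniqueness of the outer normal, i.e. bijectivity of $N_K$) is used. The remainder is just bookkeeping of the conventions: that $J$ is multiplication by $e^{i\pi/2}$, and that $\pi_K(U)$ is the boundary point of $K$ whose outer unit normal is $U/|U|$.
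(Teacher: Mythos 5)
Your proof is correct and follows essentially the same route as the paper: both arguments boil down to the fact that the central symmetry about $O$ exchanges $\eta^+$ and $\eta^-$ and reverses outer unit normals, forcing $\nu^-=-\nu^+$ and hence $\beta(\alpha)=\alpha+\pi$. The only cosmetic difference is that the paper translates to the origin-symmetric body $K-O$ and invokes evenness of the norm (i.e. $\pi_{K-O}(-\nu^+)=-\pi_{K-O}(\nu^+)$), whereas you apply the involution $\sigma(p)=2O-p$ directly to the supporting half-spaces of $K$.
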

\begin{proof}
Let $K$ be centrally symmetric with respect to $O$. Then $\eta^-$ is the symmetric point of $\eta^+$. On the other hand, the convex body $K-O$ is symmetric with respect to the origin. Then the dual norm is even and, in particular,  $\pi_{K-O}(-\nu^+)=-\pi_{K-O}(\nu^+)$. Now, since a translation takes symmetric points of $K-O$ with respect to the origin to symmetric points of $K$ with respect to $O$,  
we get $\nu^-=-\nu^+$. This implies that $\beta(\alpha)=\alpha+\pi$.
\end{proof}

The existence of a convex body $K$ of class $C^2_+$ such that $0\in \intt(K)$ for which $\Sigma_{v,\alpha}$ is $C^\infty$ is studied in Corollary \ref{cor:reg} and Proposition \ref{prop:reg}.

\begin{corollary}\label{cor:reg}
Let $v\in\rr^2 \smallsetminus \{0\}$ and $\alpha\in (0,\pi)$ be fixed. Then there exists a convex body $K$ of class $C^2_+$ with $0\in \intt(K)$ such that $\Sigma_{v,\alpha}$ is $C^\infty$.
\end{corollary}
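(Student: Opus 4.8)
The plan is to isolate the smoothness question to a single property of the convex body and then read everything else off results already proved. Recall from Lemma~\ref{lem:condreg} that if $K$ is centrally symmetric with respect to $O=\tfrac12\eta^++\tfrac12\eta^-$ then the angle function of Lemma~\ref{lem:ang} satisfies $\beta(\alpha)=\alpha+\pi$; equivalently, by the uniqueness statement in Lemma~\ref{lem:ang}, it is enough to produce a $C^2_+$ convex body $K$ with $0\in\intt(K)$ for which $\pi_K(J(ve^{i\alpha}))-\pi_K(-J(ve^{i\alpha}))$ lies on the line $L_v$. Once such a $K$ is at hand, apply the construction of Section~\ref{sec:one_singular_line} to the \emph{constant} function $\lambda\mapsto\alpha$: it is non-decreasing and of class $C^k$ for every $k$, and $\beta\equiv\alpha+\pi$ on all of $\rr$, so parts (i) and (iii) of Proposition~\ref{prop:one_singular_line} give, respectively, $u_\alpha\in C^\infty(\rr^2\setminus L_v)$ and $u_\alpha\in C^\infty$ across $L_v$; hence $u_\alpha\in C^\infty(\rr^2)$ and $\Sigma_{v,\alpha}$ is a $C^\infty$ surface. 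This is exactly the upgrade of the $C^{1,1}$ conclusion of Corollary~\ref{cor:cones} near the singular line.

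Next I would normalize and build $K$ explicitly. Since the construction is rotation invariant, assume $v=e_1$, so $L_v$ is the $x$-axis; in complex notation $\nu^+:=J(ve^{i\alpha})=e^{i(\alpha+\pi/2)}$, i.e.\ the unit vector $(-\sin\alpha,\cos\alpha)$, and $J(ve^{i(\alpha+\pi)})=-\nu^+$. Let $D$ be the closed unit disc and $T$ the linear shear $T(X,Y)=(X+\cot\alpha\,Y,\,Y)$, which is well defined since $\alpha\in(0,\pi)$ and equals the identity when $\alpha=\pi/2$ (recovering the classical sub-Riemannian case). Put $K:=T(D)$. Being a non-degenerate linear image of $D$, the body $K$ is an ellipse: it is $C^\infty$ with strictly positive geodesic curvature, hence of class $C^2_+$; it is centrally symmetric about the origin; and $0\in\intt(K)$.

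The computation at the heart of the argument is to locate the point of $\ptl K$ with outer normal $\nu^+$. Using that the supporting line of $K=T(D)$ at $Tp$, $p\in\sph^1$, has outer normal in the direction of $(T^{-1})^{\mathsf T}p$, one checks that this point is $-e_1$, which lies on $L_v$; by central symmetry the point with outer normal $-\nu^+$ is $e_1$, also on $L_v$. Hence $\eta^+:=\pi_K(\nu^+)=-e_1$ and $\eta^-:=\pi_K(-\nu^+)=e_1$ satisfy $\eta^+-\eta^-=-2e_1\in L_v$, so (the candidate pair being $\{ve^{i\alpha},ve^{i(\alpha+\pi)}\}$ with $\alpha+\pi\in(\pi,2\pi)$) the uniqueness in Lemma~\ref{lem:ang} forces $\beta(\alpha)=\alpha+\pi$; consistently, the midpoint $\tfrac12\eta^++\tfrac12\eta^-=0$ is precisely the centre of symmetry of $K$, which is the hypothesis of Lemma~\ref{lem:condreg}. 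Combining this with the reduction in the first paragraph shows that for this $K$ the surface $\Sigma_{v,\alpha}$ is $C^\infty$, which is the assertion.

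The main obstacle is the verification in the third paragraph: one must check that the shear places the boundary point with outer normal $\nu^+$ precisely on $L_v$ and, crucially, with the correct sign, so that this point is $-e_1$ and not $e_1$ — this is where the inequality $\escpr{\nu^+,v}=-\sin\alpha<0$ is used. The remaining points are routine: $0\in\intt(K)$ is automatic for an invertible linear image of $D$, a constant $\alpha$ clearly meets the monotonicity and regularity hypotheses of Proposition~\ref{prop:one_singular_line} (and of Lemma~\ref{lem:condreg}), and the passage from $u_\alpha\in C^\infty(\rr^2)$ to smoothness of $\Sigma_{v,\alpha}$ is immediate because the latter is the graph of the former.
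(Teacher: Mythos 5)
Your proposal is correct and follows essentially the same route as the paper: reduce the smoothness claim to showing $\beta(\alpha)=\alpha+\pi$ by producing a centrally symmetric $C^2_+$ body whose boundary point with outer normal $J(ve^{i\alpha})$ and its antipodal point differ by a vector of $L_v$ (via Lemma~\ref{lem:condreg}, equivalently the uniqueness in Lemma~\ref{lem:ang}), and then conclude with Proposition~\ref{prop:one_singular_line}(iii). The only difference is that you exhibit an explicit such body, the sheared disc $K=T(D)$ (an origin-centred ellipse), and verify the normal condition by the $(T^{-1})^{\mathsf T}$ computation, whereas the paper simply takes any centrally symmetric $C^2_+$ body through a prescribed boundary point with prescribed tangent; your computation is correct.
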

\begin{proof}
To construct the convex body $K$, fix a point $p\in\{(x,y) : \escpr{(x,y),ve^{i\alpha}}>0\}$ and $O\in J(L)+p\cap L$, where $L$ is the vector line generated~by $v$. Then any $K$ of class $C^2_+$ centrally symmetric with respect to $O$ containing the origin such that $p\in\partial K$ and $ve^{i\alpha}\bot T_p\partial K$ satisfies the hypothesis of Lemma \ref{lem:condreg}, where $\eta^+=p$ and $\eta^-$ is the symmetric of $\eta^+$ with respect to $O$. Thus, by $(iii)$ in Proposition \ref{prop:one_singular_line} we get that $\Sigma_{v,\alpha}$ is $C^\infty$.
\end{proof}
\begin{proposition}\label{prop:reg}
Given a convex body $K$ of class $C^2_+$ with $0\in \intt(K)$, there exists $v\in\rr^2$ such that $\Sigma_{v,\pi/2}$ is $C^{\infty}$.
\end{proposition}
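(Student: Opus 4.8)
The plan is to use part~(iii) of Proposition~\ref{prop:one_singular_line} to reduce the claim to finding a direction $v$ with a prescribed value of the angle function of Lemma~\ref{lem:ang}, to translate this into an elementary property of $\ptl K$, and to realize that property with the direction of a diameter of $K$. Concretely, fix $\alpha\equiv\pi/2$, a constant (hence $C^\infty$ and non-decreasing) function, so that Proposition~\ref{prop:one_singular_line} applies. By part~(iii) it suffices to produce $v\in\rr^2\smallsetminus\{0\}$ for which the angle function $\beta$ attached to $L_v$ by Lemma~\ref{lem:ang} satisfies $\beta(\pi/2)=3\pi/2$; indeed then $\beta(\lambda)=\alpha(\lambda)+\pi$ for all $\lambda\in\rr$, hence $u_{\pi/2}\in C^\infty(\rr^2)$ and $\Sigma_{v,\pi/2}$ is a $C^\infty$ surface. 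By the uniqueness in Lemma~\ref{lem:ang}, $\beta(\pi/2)=3\pi/2$ holds as soon as the pair $Z^+=v\,e^{i\pi/2}$, $Z^-=v\,e^{i\,3\pi/2}$ satisfies the stationary condition \eqref{eq:maincond}, that is, $\pi(J(Z^+))-\pi(J(Z^-))\in L_v$.

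Next I would translate this into a statement about $\ptl K$. Writing $\hat v=v/|v|$ and using that $J$ acts as the rotation by $\pi/2$ on $\mh$ with $J^2=-\mathrm{id}$, one gets $J(Z^+)=v\,e^{i\pi}=-v$ and $J(Z^-)=v\,e^{i\,2\pi}=v$, so $\eta^+:=\pi(J(Z^+))=N_K^{-1}(-\hat v)$ and $\eta^-:=\pi(J(Z^-))=N_K^{-1}(\hat v)$ are precisely the two points of $\ptl K$ at which the outer unit normal equals $-\hat v$ and $\hat v$ respectively. Hence the condition of the previous paragraph becomes: \emph{the chord of $\ptl K$ joining the two points whose outer normals are $\pm\hat v$ is parallel to $v$.} (Equivalently, with $\hat v(\theta)=(\cos\theta,\sin\theta)$, $c(\theta)=N_K^{-1}(\hat v(\theta))=h(\theta)\hat v(\theta)+h'(\theta)J\hat v(\theta)$ where $h$ is the support function of $K$, one computes $c(\theta)-c(\theta+\pi)=w(\theta)\hat v(\theta)+w'(\theta)J\hat v(\theta)$ with $w(\theta)=h(\theta)+h(\theta+\pi)$ the width of $K$ in direction $\hat v(\theta)$, so the admissible $\hat v$ are exactly the critical directions of $w$.)

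Finally I would exhibit such a $v$: let $p,q\in\ptl K$ realize the diameter of $K$ and set $v=p-q\neq 0$. Since $K\subset\overline B(q,|v|)$ and $p\in\ptl K\cap\ptl\overline B(q,|v|)$, and $K$ is strictly convex (being of class $C^2_+$), the support line of $K$ at $p$ coincides with the tangent line of that ball at $p$, so the outer unit normal of $\ptl K$ at $p$ is $\hat v$, and symmetrically the outer normal at $q$ is $-\hat v$; thus $\eta^-=p$, $\eta^+=q$ and $\eta^+-\eta^-=q-p=-v\in L_v$, as needed. (Alternatively, one may take $v=\hat v(\theta_0)$ with $\theta_0$ a maximum of the continuous $\pi$-periodic width function $w$, which exists by compactness and is $C^1$ since $N_K^{-1}$ is.) The only delicate point in this argument is the orientation bookkeeping in the middle step --- keeping straight the conventions for $J$, for the complex notation, and for which of $\eta^{\pm}$ is which --- so that the stationary condition is correctly identified as ``the contact-point chord of a pair of parallel support lines is orthogonal to those lines''; after that, a diameter (or a maximal-width direction) settles the existence at once.
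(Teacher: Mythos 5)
Your proposal is correct and follows essentially the same route as the paper: take $p,q\in K$ at maximal distance, set $v$ along $q-p$, use that the support lines at $p$ and $q$ are orthogonal to $q-p$ so that $\eta^{\pm}$ are these two points and $Z^{\pm}$ make angles $\pi/2,3\pi/2$ with $L_v$, then conclude smoothness via Proposition~\ref{prop:one_singular_line}(iii). Your write-up merely adds the explicit ball-containment justification of the support-line fact and the equivalent width-function reformulation, which the paper leaves implicit.
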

\begin{proof}
Let $p$ and $q$ be points in $K$ at maximal distance. Then the lines through $p$ and $q$ orthogonal to $q-p$ are support lines to $K$. Taking $v=q-p$ and setting  $p=\eta^+$ we have $q=\eta^-$, while the vectors $\nu^+$ and $\nu^-$ are over the line $L(v)$, that is, $Z^+$ $Z^-$ make angles $\pi/2$ and $3\pi/2$ with $L(v)$.
\end{proof}

For fixed $v\in\rr^2 $, we define the surface $\Sigma_{v,\alpha}^+$ as the one composed of all the horizontal half-lines $R_\lambda^+$ and $R_\lambda^-\subseteq\rr^2$ extending from the lifting of the point $p=\lambda v\in L_v$, $\lambda\ge 0$, to $\hh^1$. The surface $\Sigma_{v,\alpha}^+$ has a boundary composed of two horizontal lines and its singular set is the ray $L_v^+=\{\lambda v : \lambda > 0 \}$. We present some pictures of such surfaces.

\begin{figure}[H]
\centerline{\includegraphics[width=0.20\textwidth]{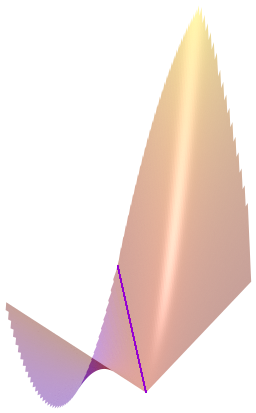}}
\caption{The surface $\Sigma_{\pi/3,\pi/6}^+$ associated to the norm $\norm{\cdot}_D$, where $D$ is the unit disk. The singular~set corresponds to the purple ray of angle $e^{i \pi/3}$.}
\label{fig:1}
\end{figure}

\pagebreak

\begin{figure}[H]
\centerline{\includegraphics[width=0.20\textwidth]{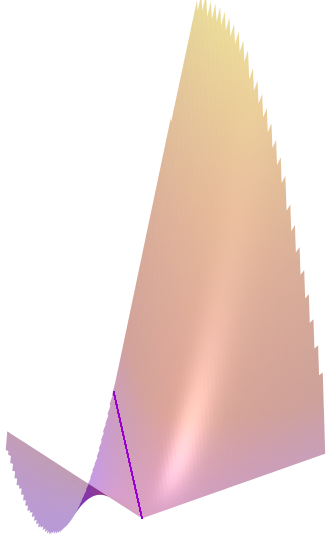}}
\caption{The surface $\Sigma_{\pi/3,\pi/6}^+$ associated to the $p$-norm with $p=1.5$. The left part of the figure coincides with the left part of Figure \ref{fig:1}, while the angle $\beta$ is bigger. Notice that also the height has increased.}
\end{figure}

\begin{figure}[H]
\centerline{\includegraphics[width=0.20\textwidth]{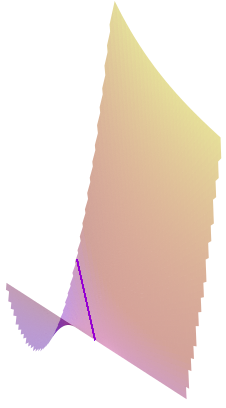}}
\caption{The surface $\Sigma_{\pi/3,\pi/6}^+$ with $\beta=\alpha+\pi$. There existence of $K$ is granted by Corollary \ref{cor:reg}.}
\end{figure}

\section{Area-Minimizing Cones in $\hh^1$}
\label{sec:cones}

We proceed now to construct examples of $K$-perimeter minimizing cones in $\hh^1$ with an arbitrary finite number of horizontal half-lines meeting at the origin. The building blocks for this construction are liftings of circular sectors of the cones considered in Corollary~\ref{cor:cones}.

We first prove the following result.

\begin{lemma}
\label{lem:sector}
Let $K$ be a convex body of class $C^2_+$ such that $0\in\intt(K)$. Let $u, w \in \mathbb{S}^1$,  $\theta=\angle(u,w)>0$. Then there exists $v \in \mathbb{S}^1$ such that the vector line $L_{v}$ generated~by $v$ splits the sector determined by $u$ and $w$ into two sectors of oriented angles $\alpha$ and $\beta$ such that $\alpha+\beta=\theta$. Moreover, the stationary condition $\pi(J(u))-\pi_K(J(w)) \in L_{v} $  is satisfied.
\end{lemma}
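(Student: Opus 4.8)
The plan is to notice that the stationary condition essentially \emph{forces} the choice of $v$, and that the only real content is checking that this forced vector lies in the interior of the prescribed sector. Write $\bar u:=\pi_K(J(u))$ and $\bar w:=\pi_K(J(w))$, which by definition of $\pi_K$ are the points of $\ptl K$ whose outer Gauss image is $J(u)$ and $J(w)$ respectively (here $J$ is the rotation by $\pi/2$ on the horizontal plane, and $N_K\colon\ptl K\to\sph^1$ is a diffeomorphism because $K$ is of class $C^2_+$). Since $\theta>0$ we have $u\neq w$, hence $J(u)\neq J(w)$ and therefore $\bar u\neq\bar w$. I would then \emph{define}
\[
v:=\frac{\bar u-\bar w}{\norm{\bar u-\bar w}}=\frac{\pi_K(J(u))-\pi_K(J(w))}{\norm{\pi_K(J(u))-\pi_K(J(w))}},
\]
so that the stationary condition $\pi_K(J(u))-\pi_K(J(w))\in L_v$ holds automatically. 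With $v$ fixed this way, the whole statement reduces to showing that the ray $\rr^+v$ lies in the \emph{interior} of the sector $S$ determined by $u$ and $w$: once this is established, $L_v$ cuts $S$ into two subsectors of positive angles $\alpha$ and $\beta$, necessarily with $\alpha+\beta=\theta$.

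To locate $v$ I would use the support lines of $K$ at $\bar u$ and $\bar w$. Since the outer unit normal to $\ptl K$ at $\bar u$ is $J(u)$, the body $K$ is contained in the closed half-plane $\{z:\escpr{z-\bar u,J(u)}\le 0\}$; and since $K$ is $C^2_+$ it is strictly convex, so the corresponding support line meets $\ptl K$ only at $\bar u$. As $\bar w\in\ptl K\smallsetminus\{\bar u\}$, this gives the \emph{strict} inequality $\escpr{\bar w-\bar u,J(u)}<0$, and symmetrically $\escpr{\bar u-\bar w,J(w)}<0$. Dividing by $\norm{\bar u-\bar w}$, the unit vector $v$ satisfies
\[
\escpr{v,J(u)}>0\qquad\text{and}\qquad\escpr{v,J(w)}<0.
\]

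It remains to turn these two inequalities into the statement $\rr^+v\subset\intt(S)$. Using complex notation, write $u=e^{i\phi}$ and fix the orientation so that $w=e^{i(\phi+\theta)}$ and $S=\{re^{i\psi}:r>0,\ \psi\in(\phi,\phi+\theta)\}$; since $J$ is the rotation by $\pi/2$ we have $J(u)=e^{i(\phi+\pi/2)}$ and $J(w)=e^{i(\phi+\theta+\pi/2)}$. The inequality $\escpr{v,J(u)}>0$ says precisely that $v$ lies in the open half-plane bounded by the line $\rr u$ that contains $J(u)$, i.e.\ $\arg v\in(\phi,\phi+\pi)$ modulo $2\pi$; the inequality $\escpr{v,J(w)}<0$ says that $v$ lies in the open half-plane bounded by $\rr w$ that contains $-J(w)$, i.e.\ $\arg v\in(\phi+\theta-\pi,\phi+\theta)$ modulo $2\pi$. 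Intersecting these two arcs and using $\theta\in(0,\pi)$ leaves exactly the arc $(\phi,\phi+\theta)$, which is the interior of $S$; hence $\rr^+v\subset\intt(S)$, and the angles $\alpha=\angle(u,v)$, $\beta=\angle(v,w)$ are positive with $\alpha+\beta=\theta$. (For $\theta\in[\pi,2\pi)$ the intersection of the two arcs is a nonempty subarc of $(\phi,\phi+\theta)$, so the argument is unchanged.) \textbf{The delicate points} are the orientation bookkeeping — making sure the two half-planes cut out exactly $S$ and not one of the complementary sectors — and the use of strict convexity of $K$, which is what upgrades the support-line inequalities to strict ones and so places $v$ in the \emph{open} sector rather than merely its closure; the rest is a consequence of how $v$ was defined.
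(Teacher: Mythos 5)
Your proof is correct, and at bottom it makes the same choice as the paper: $v$ is taken parallel to $\pi_K(J(u))-\pi_K(J(w))$, so the stationary condition holds by construction and the real content is placing $v$ inside the sector. Where you differ is in how that placement is justified. The paper reaches the direction of $L$ indirectly: it locates the two points $\eta_1,\eta_2\in\partial K$ whose support lines are parallel to the chord through $\eta_u=\pi(J(u))$ and $\eta_w=\pi(J(w))$, sets $v=-J(v_1)$ with $v_1=N_{\partial K}(\eta_1)$ (which is again just a unit vector parallel to that chord), and then simply declares, ``without loss of generality,'' that this vector lies in the sector of angle $\theta$ and its opposite in the complementary sector --- and that assertion is exactly where the geometric content sits, since a line through the origin need not meet an open sector of angle less than $\pi$ at all. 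Your support-line inequalities $\escpr{v,J(u)}>0$ and $\escpr{v,J(w)}<0$, obtained from the strict convexity of a $C^2_+$ body, supply precisely this missing verification, and the arc-intersection computation (including the remark covering $\theta\ge\pi$) closes the argument cleanly. So your route is the same construction, but with a more explicit --- and in fact more complete --- proof of the one nontrivial claim than the paper gives.
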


\begin{proof}
Let $\nu_u = J(u)$, $\nu_w= J(w)$ and $\eta_u =\pi(\nu_u)$, $ \eta_w=\pi(\nu_w)$, $\eta_u \ne \eta_w$ since $\pi$ is a $C^1$ diffeomorphism. Thus there exists a unique line $\tilde{L}$ passing through $\eta_u$ and $\eta_w$ and $L=\tilde{L}-\eta_u$ is a straight line passing though the origin. Notice that $\tilde{L}$ splits $\partial K$ in two connect open components $\partial K_1$ and $\partial K_2$. There exist two points $\eta_1 \in \partial K_1$ and $\eta_2 \in \partial K_2 $ such that $L+\eta_1$ (resp. $L+\eta_2$) is the support line at $\eta_1$ (resp. $\eta_2$). Setting $v_1= N_{\partial K}( \eta_1)$ and $v_2= N_{\partial K}( \eta_2)$ we gain that $v_i$ for $i=1,2$ is perpendicular to $L$. Without loss of generality we set that $-J(v_1)$ belongs to the portion of plane identified by the $\theta$ and $-J(v_2)$  belongs   to the portion of plane identified by the  $2 \pi - \theta$. Then we  set $v=-J(v_1)$.  Notice that $v$ splits $\theta$ in two angles $\beta=\angle(u,v)$, $\alpha=\angle(v,w)$ with  $\theta= \alpha+ \beta$ and $L=L_v$.
\end{proof}

Now we proceed with the construction inspired by the sub-Riemannian construction in \cite{MR4307010}. For $k\ge 3$ consider a fixed angle $\theta_0$ and family of positive oriented angles $\theta_1,\ldots,\theta_k$ such that $\theta_1+\cdots +\theta_k=2\pi$. Consider the planar vectors $u_0=(\cos(\theta_0),\sin(\theta_0))$ and
\[
u_i=(\cos(\theta_0+\theta_1+\cdots +\theta_i),\sin(\theta_0+\theta_1+\cdots +\theta_i)), \qquad i=1,\ldots,k.
\]
Observe that $u_k=u_0$. For every $i\in\{1,\ldots,k\}$ consider the vectors $u_{i-1},u_i$ and apply Lemma~\ref{lem:sector} to obtain a family of $k$ vectors $v_i$ in $\sph^1$ between $u_{i-1}$ and $u_i$. We lift the half-lines $L_i=\{\lambda v_i:\lambda\ge 0\}$ to horizontal straight lines passing through $(0,0,0)\in\hh^1$, and we also lift the half-lines
\[
\lambda v_i+\{\rho u_{i-1}:\rho\ge 0\}, \qquad \lambda v_i+\{\rho u_i:\rho\ge 0\},
\]
to horizontal straight lines starting from $(\lambda v_i,0)$. This way we obtain a surface \[
C_K(\theta_0,\theta_1,\ldots,\theta_k)
\]
with the following properties

\begin{theorem}
\label{thm:cones}
The surface $C_K(\theta_0,\theta_1,\ldots,\theta_k)$ is $K$-perimeter-minimizing cone which is the graph of a $C^1$ function.
\end{theorem}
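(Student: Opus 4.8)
The plan is to assemble $C_K(\theta_0,\theta_1,\ldots,\theta_k)$ out of the circular-sector building blocks and to verify the three claims in turn: that it is a graph, that the graphing function is $C^1$, and that the surface is $K$-perimeter-minimizing.

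First I would show $C_K(\theta_0,\theta_1,\ldots,\theta_k)$ is a horizontal graph over the $xy$-plane. For each $i\in\{1,\dots,k\}$, Lemma~\ref{lem:sector} produces $v_i\in\sph^1$ splitting the sector between $u_{i-1}$ and $u_i$ into oriented angles $\alpha_i,\beta_i$ with $\alpha_i+\beta_i=\theta_i$; the half-lines $\lambda v_i+\{\rho u_{i-1}:\rho\ge 0\}$ and $\lambda v_i+\{\rho u_i:\rho\ge 0\}$, for $\lambda\ge 0$, together with $L_i$, sweep out exactly the closed planar sector $\Sigma_i$ between $u_{i-1}$ and $u_i$ (this is the same sweeping argument used for $\Sigma_{v,\alpha}^+$ in Section~\ref{sec:one_singular_line}, since within $\Sigma_i$ the configuration near $L_i$ is precisely half of a $\Sigma_{v_i,\alpha_i}$-type surface). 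Because $\theta_1+\cdots+\theta_k=2\pi$ and $u_k=u_0$, the sectors $\Sigma_1,\ldots,\Sigma_k$ tile $\rr^2$ with disjoint interiors, and over each half-line in the configuration the lifted horizontal half-line is a graph, so the whole surface is the graph of a well-defined function $u:\rr^2\to\rr$. On each $\Sigma_i$ this function agrees with (a rotated, origin-based copy of) the function $u_{\alpha_i}$ from Corollary~\ref{cor:cones}; in particular $u$ is positively $1$-homogeneous, so $C_K(\theta_0,\dots,\theta_k)$ is a cone.

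Next, the regularity. On the interior of each $\Sigma_i$, $u$ is smooth away from $L_i$ and $C^{1,1}$ across $L_i$ by Corollary~\ref{cor:cones}; it remains to check $C^1$-matching across the common edge $\{\rho u_i:\rho\ge 0\}$ of consecutive sectors $\Sigma_i$ and $\Sigma_{i+1}$. Here the key point is that this edge is itself one of the lifted horizontal half-lines (the ray $\lambda v_i+\rho u_i$ with $\lambda=0$, equivalently $\lambda v_{i+1}+\rho u_i$ with $\lambda=0$), so along it $C_K$ is a single horizontal line in $\hh^1$; the tangent plane to $\Sigma_i$ and to $\Sigma_{i+1}$ along this ray both contain that line, and both have horizontal normal equal to $J(u_i)$ (up to sign), because by construction the characteristic direction assigned on either side of this ray is $u_i$. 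Hence the tangent planes agree and $u$ is $C^1$ across the edge; combined with the interior regularity, $u\in C^1(\rr^2)$ (and smooth off the $k$ rays $\rho v_i$ and the $k$ rays $\rho u_i$).

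Finally, $K$-perimeter-minimality. I would run the calibration argument of Proposition~\ref{prop:one_singular_line}(iv) globally. Let $E$ be the subgraph of $u$ and $F=E$ outside a Euclidean ball $B$ at the origin. On the open region over $\mathrm{int}(\Sigma_i)$, split further along $L_i$ and define vector fields by vertical translation of $\pi(\nu_E)$ on each of the two halves; each such field is divergence-free by Theorem~3.14 in \cite{PozueloRitore2021}, since the corresponding surfaces are foliated by horizontal straight lines. Applying the divergence theorem on each of the $2k$ pieces of $B$, the interior face integrals come in two types: across each $L_i$ the jump integrand is $\escpr{\eta^+-\eta^-,J(v_i)}$, which vanishes by the stationary condition in Lemma~\ref{lem:sector} (equivalently Lemma~\ref{lem:ang}); across each ray $\rho u_i$ separating $\Sigma_i$ from $\Sigma_{i+1}$ the two fields have the \emph{same} horizontal value $\pi(J(u_i))$ while the two outer conormals are opposite, so those contributions cancel as well. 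Summing over all pieces leaves only the boundary term on $\ptl B$ plus $\sum\int_{B}\escpr{U^j,\nu_F}\,d|\ptl F|$; since $|\,\pi(\nu)\,|_{K,*}\le 1$ this last sum is $\le P_K(F,B)$ by Cauchy--Schwarz, with equality when $F=E$, and the $\ptl B$ terms cancel because $F=E$ there. This yields $P_K(E,B)\le P_K(F,B)$ for every competitor, i.e. $C_K(\theta_0,\dots,\theta_k)$ is $K$-perimeter-minimizing.

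The main obstacle is the bookkeeping at the edges $\rho u_i$: one must be sure that the characteristic vector field the two adjacent sectors induce along that ray is literally the same vector $u_i$ (not $u_i$ versus $-u_i$, and not some other angle), so that both the $C^1$-matching in the regularity step and the cancellation in the calibration step go through. This is exactly where the orientation choices in the statement of Lemma~\ref{lem:sector} (the placement of $-J(v_1)$ in the $\theta$-side and the resulting identification of $v$, $\alpha$, $\beta$) must be tracked carefully across consecutive applications; once the orientations are pinned down consistently, everything else is a routine reassembly of the one-line results from Section~\ref{sec:one_singular_line}.
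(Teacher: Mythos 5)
Your overall strategy coincides with the paper's: the paper also treats the cone and entire-graph properties as consequences of the construction and obtains the minimizing property and the $C^1$ regularity by the same gluing/calibration arguments, simply delegating them to Proposition~2.4 and Proposition~3.2(4) of \cite{MR4307010}, which is exactly what you re-derive from Lemma~\ref{lem:sector} and the calibration of Proposition~\ref{prop:one_singular_line}(iv). Your calibration step is sound: the interface terms over the vertical planes through the rays $L_i$ vanish by the stationarity condition of Lemma~\ref{lem:sector}, and those over the rays $\{\rho u_i:\rho\ge 0\}$ cancel because the two fields take the same value there while the outer conormals are opposite.

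There is, however, a gap in your justification of the $C^1$ matching across the edge $\{\rho u_i:\rho\ge 0\}$. Knowing that both one-sided tangent planes contain the lifted horizontal edge and that both horizontal normals are proportional to $J(u_i)$ does \emph{not} determine the tangent plane: if $Z$ denotes the horizontal lift of $u_i$ at a point of the edge, every plane with normal $\cos\phi\,J(Z)+\sin\phi\,T$, $\cos\phi\neq 0$, contains $Z$ and has horizontal unit normal $\pm J(Z)$, so the vertical tilt $\phi$ is left free by your two conditions. What has to be checked is that the derivative of the graphing function transversal to the edge agrees from both sides. This is true, and elementary: normalizing $u_i=(1,0)$, the two adjacent sector pieces are the graphs of $xy+y^{2}\cot\gamma$ on $\{y\le 0\}$ and $xy-y^{2}\cot\delta$ on $\{y\ge 0\}$ (with $\gamma,\delta$ the angles from $v_i$, resp.\ $v_{i+1}$, to $u_i$), and both gradients equal $(0,x)$ on $y=0$ independently of $\gamma,\delta$; this computation is the content of the result in \cite{MR4307010} that the paper invokes. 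A second, minor slip: the graphing function is homogeneous of degree $2$ under Euclidean dilations of the plane (the cone property refers to the non-isotropic dilations of $\hh^1$), not positively $1$-homogeneous, as the model $u_\alpha(x,y)=-xy+\cot\alpha\, y|y|$ already shows.
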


\begin{proof}
$C_K(\theta_0,\theta_1,\ldots,\theta_k)$ is a cone by construction. It is an entire graph since it is composed of horizontal lifting of straight half-lines in the $xy$-plane that covered the whole plane without intersecting themselves transversally. The $K$-perimeter-minimizing property follows in a similar way to from Proposition~2.4 in \cite{MR4307010}. That it is the graph of a $C^1$ function is proven like in Proposition~3.2(4) in \cite{MR4307010}.
\end{proof}

A particular example of area-minimizing cones are those who uses the sub-Riemannian cones $C_\alpha$ restricted to the circular sector with $\theta\in(-\alpha,\alpha)$ as as model piece of the cone. Taking $K=D$, $k\geq 3$, and the angle $\alpha=\pi/k$, we define
\[
C(k)=C_D\big(\frac{\pi}{k},\frac{2\pi}{k},\ldots,\frac{2\pi}{k}\big).
\]
Let us denote by $u_k$ the functions in $\rr^2$ whose graph is $C(k)$. The behavior when $k$ tends to infinity of $u_k$ in a disk is analyzed in the following result.

\begin{proposition}
 The sequence $u_k$ converge to $0$ uniformly on compact subsets of $\rr^2$. Moreover, the sub-Riemannian area of $u_k$ converges locally to the sub-Riemannian area of the plane $t=0$. Moreover the sub-Riemannian area of $u_k$ converges to the one of the plane $t=0$.
\end{proposition}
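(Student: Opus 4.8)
The plan is to extract both statements from the completely explicit description of $C(k)$. Recall from the Example that $C(k)$ is obtained by lifting, around $k$ equally spaced singular rays through the origin (equally spaced because $K=D$ is centrally symmetric), copies of the sub-Riemannian cone $\Sigma_{\pi/k}$ — the $t$-graph of \eqref{eq:ualpha} with $\alpha=\pi/k$ — restricted to the sector $\{|\theta|<\pi/k\}$ about the positive direction of its singular line. The assembling maps are rotations of $\hh^1$ about the $t$-axis; these are isometries, they preserve the sub-Riemannian structure attached to $K=D$, they send $t$-graphs to $t$-graphs, and they fix $|z|$ at each point $(z,t)$. So everything reduces to \eqref{eq:ualpha}.

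For the uniform convergence I would write the model piece in polar coordinates $(r,\theta)$ centred on its singular ray. Then \eqref{eq:ualpha} gives $u_{\pi/k}(r\cos\theta,r\sin\theta)=r^2\sin\theta\,\sin(\theta-\pi/k)/\sin(\pi/k)$ for $\theta\in(0,\pi/k)$, and the symmetric expression on $(-\pi/k,0)$, whence
\[
|u_{\pi/k}(z)|\le\frac{|z|^2}{\sin(\pi/k)}\max_{0\le\theta\le\pi/k}\sin\theta\,\sin\Big(\frac\pi k-\theta\Big)=\frac{|z|^2\sin^2(\pi/2k)}{\sin(\pi/k)}=\frac{|z|^2}{2}\tan\Big(\frac{\pi}{2k}\Big).
\]
Since the assembling rotations preserve $|z|$, the bound $|u_k(z)|\le\tfrac12|z|^2\tan(\pi/2k)$ holds on all of $\rr^2$, and the right-hand side tends to $0$ uniformly on compact sets. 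This is the first assertion; it also shows that the subgraphs $E_k$ converge to $\{t<0\}$ in $L^1_{\mathrm{loc}}(\hh^1)$.

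For the areas I would first record that $C(k)$ is a cone which is a $C^1$ graph (Corollary~\ref{cor:cones} and Theorem~\ref{thm:cones}), so $u_k$ is $2$-homogeneous: $u_k(r\cos\theta,r\sin\theta)=r^2 g_k(\theta)$ with $g_k\in C^1(\sph^1)$, $|g_k|\le\tfrac12\tan(\pi/2k)$ by the above, and $|g_k'|\le 1$ on $\sph^1$ (the model piece has $g_k'(\theta)=\pm\sin(\pi/k-2\theta)/\sin(\pi/k)$ on each half-wedge). A direct computation from the Remark of \S\ref{sc:subfinsler} (for $K=D$ the dual norm is Euclidean) gives the sub-Riemannian area of the $t$-graph of $u_k$ over a bounded open $\Om\subset\rr^2$ as $A(u_k,\Om)=\int_\Om|\nabla u_k+F|\,dz$, with $F(x,y)=(-y,x)$ of constant length $r$. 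In polar coordinates $\nabla u_k+F=2rg_k\,\hat r+r(g_k'+1)\,\hat\theta$, so $|\nabla u_k+F|=r\sqrt{4g_k^2+(g_k'+1)^2}$ with $g_k'+1\ge0$; writing the difference of $|\nabla u_k+F|$ and $|F|$ as $(|\nabla u_k+F|^2-|F|^2)/(|\nabla u_k+F|+|F|)$ and using $|g_k'|\le1$, one obtains $\bigl|\,|\nabla u_k+F|-|F|-rg_k'\,\bigr|\le C\,r\,\tan(\pi/2k)$ with $C$ absolute. Integrating,
\[
A(u_k,\Om)-A(0,\Om)=\int_\Om r\,g_k'(\theta)\,dz+O\big(\tan(\pi/2k)\cdot A(0,\Om)\big),
\]
and when $\Om$ is a disk centred at the origin the first term equals $\tfrac13R^3\int_0^{2\pi}g_k'(\theta)\,d\theta=0$ by periodicity of $g_k$, so $A(u_k,\Om)\to A(0,\Om)$. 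For $\Om$ bounded and star-shaped about the origin with $C^1$ radial profile, one integration by parts in $\theta$ makes that term $O(\tan(\pi/2k))$ too; combined with the uniform bound just obtained and lower semicontinuity of the $K$-perimeter, this yields the local convergence of areas for arbitrary bounded $\Om$, while the case of a fixed ball is the last assertion.

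The main obstacle — and the reason a naive passage to the limit fails — is that $\nabla u_k$ does \emph{not} converge to $0$: its tangential component $r\,g_k'(\theta)\,\hat\theta$ stays of order $r$ and merely oscillates. What rescues the argument is that this component converges only weakly, so the mean-zero cancellation $\int_0^{2\pi}g_k'=0$ removes its contribution to the area, while the heights being $O(|z|^2/k)$ make the radial part of $\nabla u_k+F$ negligible. One could alternatively avoid the explicit computation altogether: since both $C(k)$ and $\{t=0\}$ are $K$-area-minimizing (Theorem~\ref{thm:cones}, together with the convexity of the $t$-graph area functional noted in the Introduction), the standard no-mass-loss property of minimizers under $L^1_{\mathrm{loc}}$ convergence plus lower semicontinuity give $P_K(E_k,\cdot)\rightharpoonup P_K(\{t<0\},\cdot)$; but the explicit route above is shorter and self-contained.
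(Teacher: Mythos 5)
Your argument is correct, and its first half (bounding the height of $u_k$ on each sector by the height of the model piece $u_{\pi/k}$ from \eqref{eq:ualpha} in polar coordinates, using the rotational symmetry of the construction for $K=D$) is exactly the paper's argument, with a slightly sharper constant ($\tfrac12|z|^2\tan(\pi/2k)$ versus the paper's $2r_0^2\sin(\pi/k)$). For the convergence of areas you take a genuinely different route. The paper stays with the explicit formula: from \eqref{eq:ualpha} one gets $\|\nabla u_\alpha+(-y,x)\|=2|y|/\sin\alpha$ on the model sector, and rotational invariance then yields the closed-form value $A_D(u_k,r_0)=\tfrac{4\pi r_0^3}{3}\,\tfrac{1-\cos(\pi/k)}{(\pi/k)\sin(\pi/k)}$, whose limit $\tfrac{2\pi r_0^3}{3}$ is visibly the area of $t=0$ over the disk. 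You instead exploit the $2$-homogeneity $u_k=r^2g_k(\theta)$, the bounds $|g_k|\le\tfrac12\tan(\pi/2k)$, $|g_k'|\le 1$, $1+g_k'\ge 0$ (all correctly read off from the model piece), the elementary estimate $0\le\sqrt{4g_k^2+(1+g_k')^2}-(1+g_k')\le 2|g_k|$, and the mean-zero cancellation $\int_0^{2\pi}g_k'\,d\theta=0$; this gives $A(u_k,\Om)\to A(0,\Om)$ without ever integrating exactly. The paper's computation is shorter and produces the explicit area value; your asymptotic argument isolates the genuine mechanism (the oscillating tangential part $r\,g_k'\,\hat\theta$ of $\nabla u_k$ does not go to zero pointwise but averages out), and it adapts more easily to non-circular domains, which is relevant since the statement says ``converges locally'' while the paper's proof, like your core computation, only treats disks centered at the origin — your extra remarks on star-shaped domains and on the minimizer/lower-semicontinuity alternative are sketchier, but they go beyond what the paper itself proves, so nothing essential is missing.
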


\begin{proof}
Since $u_k$ is obtained by collating some rotated copies of $u_\alpha$, where $\alpha=\pi/k$, we can estimate the height of $u_k$ by the height of $u_\alpha$.  By \eqref{eq:ualpha},  using polar coordinates $(r,\theta)$, where $\theta\in [-\alpha,\alpha]$ and $r<r_0$, we get
\begin{equation*}
|u_\alpha|\le 2r_0^2 |\sin(\pi/k)|
\end{equation*}
on $D(r_0)=\overline{B}(0,r_0)$. The claim follows since $\lim_{k\to\infty}\sin(\pi/k)=0$.

The sub-Riemannian area of the graph of $u_k$ over $D(r_0)$ is given by
\[
A_D(u_k,r_0)=\int_{D(r_0)}\| \nabla u_k + (-y,x)\| dxdy.
\]
Since the sub-Riemannian perimeter is rotationally invariant, we can decompose the above integral as $k$ times the area of the cone $C_\alpha$ in the circular sector with $\theta\in(-\alpha,\alpha)$ and $r<r_0$.  By \eqref{eq:ualpha}, it is immediate that 
\[
\|\nabla u_k(x,y)+(-y,x)\|=2|y|\sin^{-1}(\alpha).
\]
A direct computation shows that
\[
A_D(u_k,r_0)=\frac{4\pi r_0^3}{3} \frac{1-\cos \pi/k}{(\pi/k)\sin\pi/k}.
\]
Then $A_D(u_k,r_0)$ tends to $\frac{2\pi r_0^3}{3}$ as $k\to +\infty$. 
\end{proof}

\begin{figure}[htp]
\centerline{\includegraphics[width=0.33\textwidth]{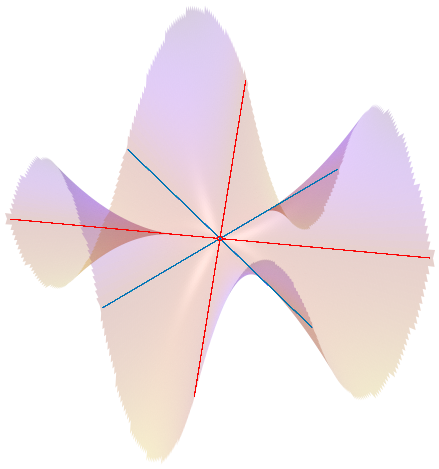}}
\caption{The cone  $C(4)$.
 The singular set is  composed of the red rays of angle $0,\pi/2,\pi,(3\pi)/2$, while the rays of angles $\pi/4,(3\pi)/4,(5\pi)/4),(7\pi)/4$, where two pieces of the construction meet, are depicted in cyan.}
\end{figure}

\begin{figure}[htp]
\centerline{\includegraphics[width=0.33\textwidth]{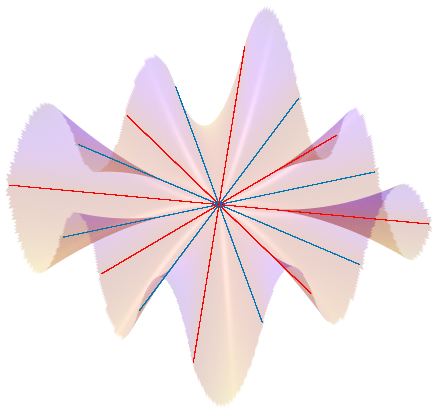}
\hspace{1cm}
\includegraphics[width=0.4\textwidth]{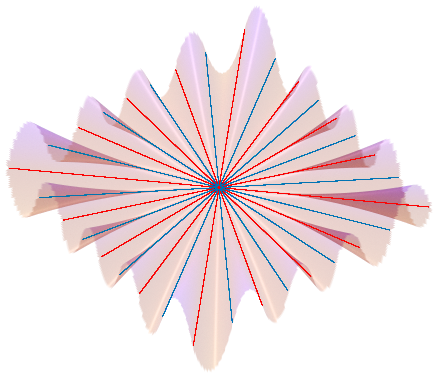}}
\caption{The cones $C(8)$ and $C(16)$. They are depicted at the same in this Figure and the previous one. As the number of angles increases, the cone produces more oscilations of smaller height.
}
\end{figure}

\end{document}